\tikzstyle{every node}=[circle, draw, fill=black,inner sep=0pt, minimum width=4pt]
\newtheorem{lem}{Lemma}
\newtheorem{cor}[lem]{Corollary}
\newtheorem{thm}[lem]{Theorem}
\newtheorem{conj}{Conjecture}
\theoremstyle{definition}
\newtheorem{defn}{Definition}
\def\aftermath{\par\vspace{-\belowdisplayskip}\vspace{-\parskip}\vspace{-\baselineskip}}
\newcommand{\aaside}[2]{\marginnote{\scriptsize{#1}}[#2]}
\newcommand\EmphE[2]{\emph{#1}\aaside{#1}{#2}}
\begin{document}
\def\F{\mathcal{F}}
\def\Z{\mathbb{Z}}
\def\ex{\textrm{ex}}
\def\exP{\textrm{ex}_{\mathcal{P}}}
\def\Turan{Tur\'{a}n}

\title{Planar {\Turan} Numbers of Cycles:\\ A Counterexample}

\date{}
\author{Daniel W. Cranston\thanks{%
Department of Computer Science, Virginia Commonwealth
University, Richmond, VA, USA;
\texttt{dcranston@vcu.edu}%; 
} 
\and 
Bernard Lidick\'{y}\thanks{Iowa State University, Department of 
Mathematics, Iowa State University, Ames, IA, USA; \newline
\texttt{lidicky@iastate.edu}. Research of this author is partially supported by NSF grant DMS-1855653.}
\and Xiaonan Liu\thanks{School of Mathematics, Georgia Institute of Technology, Atlanta, GA, USA; \texttt{xliu729@gatech.edu}} 
\and Abhinav Shantanam \thanks{Department of Mathematics, 
Simon Fraser University, Burnaby, BC, Canada;
\texttt{ashantan@sfu.ca}}
}

\maketitle
\begin{abstract}
The planar {\Turan} number $\textrm{ex}_{\mathcal{P}}(C_{\ell},n)$ is the largest
number of edges in an $n$-vertex planar graph with no $\ell$-cycle.  For
$\ell\in \{3,4,5,6\}$, upper bounds on $\textrm{ex}_{\mathcal{P}}(C_{\ell},n)$ 
are known that hold with equality infinitely
often.  Ghosh, Gy\"{o}ri, Martin, Paulo, and Xiao [arxiv:2004.14094] conjectured an upper bound on
$\textrm{ex}_{\mathcal{P}}(C_{\ell},n)$ for every $\ell\ge 7$ and $n$ sufficiently
large.  We disprove this conjecture for every $\ell\ge 11$.  We also propose two revised versions of the conjecture.
\end{abstract}

\section{Introduction}
The {\Turan} number $\ex(n,H)$ for a graph $H$ is the maximum number of edges in
an $n$-vertex graph with no copy of $H$ as a subgraph.  {\Turan} famously showed
that $\ex(n,K_{\ell})\le (1-\frac1{\ell-1})\frac{n^2}2$; for example, see
\cite[Chapter 32]{PFTB}.  The Erd\H{o}s--Stone
Theorem~\cite[Exercise 10.38]{lovasz-problems-book}
generalizes this result, by asymptotically determining $\ex(n,H)$ for every
non-bipartite graph $H$: $\ex(n,H)=(1-\frac1{\chi(H)-1})\frac{n^2}2+o(n^2)$; here
$\chi(H)$ is the chromatic number of $H$.  Dowden~\cite{dowden} considered the
problem when restricting to $n$-vertex graphs that are planar.
The \emph{planar {\Turan} number} $\exP(n,H)$\aaside{$\exP(n,H)$}{-4mm} for a graph $H$ is the maximum number of
edges in an $n$-vertex planar graph with no copy of $H$ as a subgraph (not
necessarily induced).   This parameter has been investigated for various graphs $H$ in~\cite{LSS2} and~\cite{FZW}; but here we focus mainly on cycles.
It is well-known that if $G$ is an $n$-vertex planar
graph with no triangle, then $G$ has at most $2n-4$ edges; further, this bound
is achieved by every planar graph with each face of length 4.  Thus,
$\exP(n,C_3)=2n-4$ for all $n\ge 4$.  Dowden~\cite{dowden}
proved that $\exP(n,C_4)\le\frac{15(n-2)}7$ for all $n\ge 4$ and
$\exP(n,C_5)\le\frac{12n-33}5$ for all $n\ge 11$.  He also gave constructions
showing that both of these bounds are sharp infinitely often.

For each $k\in \{4,5\}$, form $\Theta_k$ from $C_k$ by adding a chord of the cycle.  
Lan, Shi, and Song~\cite{LSS} showed 
that $\exP(n,\Theta_4)\le \frac{12(n-2)}5$ for all $n\ge 4$, 
that $\exP(n,\Theta_5)\le \frac{5(n-2)}2$ for all $n\ge 5$, and
that $\exP(n,C_6)\le \frac{18(n-2)}7$ for all $n\ge 7$. 
The bounds for $\Theta_4$ and $\Theta_5$ are sharp infinitely often.
However, the bound for $C_6$ was
strengthened by 
Ghosh, Gy\"{o}ri, Martin, Paulos, and Xiao~\cite{GGMPX}, who showed
that $\exP(n,C_6)\le \frac{5n-14}2$ for all $n\ge 18$.  
They also showed that this bound is sharp infinitely often.
In the same paper, Ghosh
et al. conjectured a bound on $\exP(n,C_{\ell})$ for each $\ell\ge 7$ and each
sufficiently large $n$.  In this note, we disprove their conjecture.

\begin{conj}[\cite{GGMPX}; now disproved]
\label{main-conj}
For each $\ell\ge 7$, for $n$ sufficiently large, if $G$ is an $n$-vertex planar
graph with no copy of $C_{\ell}$, then $e(G)\le
\frac{3(\ell-1)}{\ell}n-\frac{6(\ell+1)}{\ell}$.  
That is, $\exP(n,C_{\ell})\le \frac{3(\ell-1)}{\ell}n-\frac{6(\ell+1)}{\ell}$.  
\end{conj}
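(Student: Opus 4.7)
My plan is to exhibit, for each $\ell\geq 11$ and infinitely many $n$, a planar $n$-vertex $C_\ell$-free graph $G$ whose edge count exceeds the conjectured upper bound $\frac{3(\ell-1)}{\ell}n-\frac{6(\ell+1)}{\ell}$.  The strategy is to take many disjoint copies of a single small planar ``building block'' $H_\ell$ and identify them all at one common vertex, so that the block-cut tree of the resulting graph is a star.

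Concretely, let $H_\ell$ be a planar triangulation on $k:=\ell+2$ vertices (so $H_\ell$ has $3\ell$ edges) that contains no cycle of length $\ell$.  Amalgamating $m$ disjoint copies of $H_\ell$ at a single vertex $v$ produces a planar graph $G_m$ with $n:=1+m(\ell+1)$ vertices and $e(G_m)=3\ell m$ edges.  Since any two copies share only the vertex $v$, and $v$ is a cut vertex separating them, every cycle of $G_m$ lies inside a single copy of $H_\ell$; hence $G_m$ is $C_\ell$-free.  A direct computation gives
\[
e(G_m)-\Bigl(\tfrac{3(\ell-1)}{\ell}n-\tfrac{6(\ell+1)}{\ell}\Bigr)=\tfrac{3(m+\ell+3)}{\ell},
\]
which is strictly positive for every $m\geq 1$; so $G_m$ violates Conjecture~\ref{main-conj} whenever $H_\ell$ exists.

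The main obstacle is the construction of $H_\ell$: a planar triangulation on exactly $\ell+2$ vertices with no cycle of length $\ell$.  Such a triangulation is non-pancyclic, missing precisely the length $\ell$ from its cycle spectrum.  The threshold $\ell\geq 11$ in the claim presumably corresponds to the smallest $\ell$ for which such a non-pancyclic small triangulation first exists (at $\ell+2=13$ vertices); for $\ell\leq 10$ it appears that every planar triangulation on at most $12$ vertices contains a cycle of every length from $3$ up to its order, so the block-and-glue strategy fails there.  For $\ell=11$ I would exhibit $H_{11}$ explicitly---e.g., by starting from a symmetric small triangulation (an octahedron-based gadget or an antiprism capped with apex vertices) and stacking a few additional vertices on carefully chosen triangular faces---and then verify by a finite cycle enumeration, feasible since $H_{11}$ has only $13$ vertices, that no $11$-cycle appears.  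For each $\ell\geq 12$ I would produce $H_\ell$ by an analogous construction or a mild inductive modification and verify the cycle-spectrum condition case by case.
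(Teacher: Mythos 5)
Your construction and algebra are correct as far as they go: amalgamating $m$ disjoint copies of a planar triangulation $H_\ell$ on $\ell+2$ vertices at a single cut vertex gives a planar, $C_\ell$-free graph on $n=1+m(\ell+1)$ vertices with $3\ell m$ edges, and
\[
3\ell m-\Bigl(\tfrac{3(\ell-1)}{\ell}n-\tfrac{6(\ell+1)}{\ell}\Bigr)=\tfrac{3(m+\ell+3)}{\ell}>0 .
\]
This is a genuinely different route from the paper: the paper never uses cut-vertex amalgamation but instead ``substitutes'' the gadget into a planar graph of girth $\ell+1$ (Definition~\ref{sub-defn} and Lemma~\ref{lem:construction}), which is more efficient --- it only needs a gadget on at least $\ell$ vertices with no $C_\ell$, whereas (as your slope calculation implicitly shows) the cut-vertex trick needs at least $\ell+2$ vertices.

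The genuine gap is that you never establish the existence of $H_\ell$, which you yourself identify as ``the main obstacle.'' The entire argument rests on a planar triangulation on $\ell+2$ vertices with no cycle of length exactly $\ell$, and this is merely asserted --- ``presumably,'' ``I would exhibit $H_{11}$ explicitly,'' ``verify by a finite cycle enumeration,'' ``produce $H_\ell$ by an analogous construction or a mild inductive modification'' --- without a single concrete triangulation or a general construction and proof. Compare with the paper, where Lemma~\ref{non-ham-lem} does this work: it constructs, for every $t\ge 5$, triangulations on $3t-4$ and $3t-3$ vertices whose cycles all have length at most $2t$ and $2t+1$ respectively, by stacking a new vertex into each face of a $t$-vertex triangulation and using the fact that the added vertices form an independent set. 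Note moreover that these gadgets do not in general meet your stronger requirement: for your amalgamation you need $n_B\ge \ell+2$, and the paper's gadget only attains $n_B\ge \ell+2$ for $\ell\ge 15$ (odd) or $\ell\ge 16$ (even); for $\ell\in\{11,12,13,14\}$ Lemma~\ref{non-ham-lem} gives gadgets of size $\ell$ or $\ell+1$, which is enough for the paper's substitution but not for your construction. So even after citing the paper's gadget, you would be left with a handful of small $\ell$ for which you need a triangulation on $\ell+2$ vertices missing exactly the length $\ell$ from its cycle spectrum, a delicate condition that you do not verify. Until you produce such a gadget (or enlarge the gadget by a Moon--Moser-type stacking argument and rework the size bound), the disproof is incomplete.
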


In fact, we disprove the conjecture in a strong way.

\begin{thm}
\label{over-thm}
For each $\ell\ge 11$ and each $n$ sufficiently large
(as a function of $\ell$), we have
$\exP(n,C_{\ell}) > \frac{3(\ell-1)}{\ell}n-\frac{6(\ell+1)}{\ell}$.  
Furthermore, if there exists a function $s:\Z^+\to \Z^+$ such that
$\exP(n,C_{\ell})\le \frac{3(s(\ell)-1)}{s(\ell)}n$ for all $\ell$ and all $n$
sufficiently large (as a function of $\ell$), then
$s(\ell)=\Omega(\ell^{\lg_23})$.
\end{thm}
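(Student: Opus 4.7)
The plan is to exhibit, for each $\ell$ and every sufficiently large $n$, a planar $n$-vertex graph $G$ with no $C_{\ell}$ and $e(G)\ge (3 - O(\ell^{-\lg_2 3}))n$; both statements of Theorem~\ref{over-thm} will then follow. The key object is a ``building block'' $H_{\ell}$: a planar graph on $n_0 = \Theta(\ell^{\lg_2 3})$ vertices that is nearly a triangulation, with $e(H_{\ell}) \ge 3n_0 - O(1)$ edges, and whose circumference is strictly less than $\ell$ (so in particular $H_{\ell}$ contains no $C_{\ell}$).

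I would construct $H_{\ell}$ by a Tutte/Chen--Yu-style recursive \emph{fragment replacement}: start from a small triangulation such as $K_4$ and iteratively replace each triangular face with a carefully chosen near-triangulated fragment on three boundary vertices, designed so that any long cycle crossing the fragment can use only two of its three boundary attachments in a long way. After $k = \lceil \lg_2 \ell \rceil + O(1)$ rounds one obtains a planar near-triangulation on $\sim 3^k \sim \ell^{\lg_2 3}$ vertices with circumference less than $\ell$; this is the multiplicative recursion familiar from the tight examples for the Chen--Yu circumference bound $\Omega(n^{\log_3 2})$ on $3$-connected planar graphs. Given $H_{\ell}$, the extension to arbitrary $n$ is routine: take $\lfloor n/n_0 \rfloor$ copies of $H_{\ell}$, identify them all at a single shared vertex (so that vertex is a cut vertex and no cycle ever crosses between copies), and pad with a small planar patch to hit $n$ exactly. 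The resulting $G$ is $C_{\ell}$-free with $e(G) = (3 - O(1/n_0))n = (3 - O(\ell^{-\lg_2 3}))n$.

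For the first statement, since $\ell^{-\lg_2 3} = o(1/\ell)$, this edge count exceeds $\frac{3(\ell-1)}{\ell}n - \frac{6(\ell+1)}{\ell}$ once $\ell$ is large enough; tracking the implicit constants in the recursion is what pins the threshold at $\ell \ge 11$. For the second statement, if $\exP(n,C_{\ell}) \le \frac{3(s(\ell)-1)}{s(\ell)} n$ for all large $n$, then the same construction forces $3/s(\ell) \le O(\ell^{-\lg_2 3})$, i.e., $s(\ell) = \Omega(\ell^{\lg_2 3})$.

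The main obstacle is the explicit fragment design: one simultaneously needs (i) a triangulated gadget on $O(1)$ internal vertices, so that iterated replacement drives $e/n$ up to $3$, and (ii) the ``two-of-three'' through-capacity for cycles, so that circumference grows only as $n^{\log_3 2}$. Chen--Yu's extremal examples satisfy (ii) but are typically cubic and so are far from (i); standard triangulations satisfy (i) but can have much longer cycles. The technical heart of the argument is therefore designing a triangulated fragment with three external attachment vertices that enforces a $2/3$ cycle-capacity at each recursion level, and bookkeeping the additive $O(1)$ edge deficits through $\Theta(\lg \ell)$ recursion steps so that the final $n_0$-vertex graph really does have $3n_0 - O(1)$ edges.
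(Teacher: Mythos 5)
Your high-level idea matches the paper's in spirit: build a gadget $H_\ell$ that is a plane triangulation on $n_0 = \Theta(\ell^{\lg_2 3})$ vertices with circumference $< \ell$ (this is exactly the Moon--Moser stacking construction; the ``two-of-three through-capacity'' framing is an unnecessarily complicated way of saying the newly stacked vertices at each round form an independent set), and then glue many copies together so that no $C_\ell$ appears. But the gluing step is where you differ from the paper, and where your version loses a crucial amount of density. You identify all copies at a single cut vertex, which yields $n(G)=m(n_0-1)+1$ and $e(G)=m(3n_0-6)$, hence a limiting density of $\frac{3n_0-6}{n_0-1}=3-\frac{3}{n_0-1}$. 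The paper instead ``substitutes'' the gadget into a sparse planar skeleton of girth $\ell+1$ in which every vertex has degree $2$ or $3$ (Definition~\ref{sub-defn}); since every cycle of the skeleton is longer than $\ell$, the gadgets can safely share roughly $1.5$ vertices apiece, and Lemma~\ref{obs} shows the resulting density is $3-\frac{3(\ell-3)}{(n_0-1)(\ell-1)-2}$, which is strictly larger than your $3-\frac{3}{n_0-1}$ for every $n_0>2$. For the asymptotic second statement (the $s(\ell)=\Omega(\ell^{\lg_2 3})$ bound) both work, since both give $3-\Theta(\ell^{-\lg_2 3})$.

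The gap is in the first statement, precisely at $\ell\in\{11,12\}$. To beat $\frac{3(\ell-1)}{\ell}n-\frac{6(\ell+1)}{\ell}$, your cut-vertex construction needs a triangulation on $n_0\ge \ell+1$ vertices with circumference $\le \ell-1$; the paper's substitution only needs $n_0\ge\ell$. One round of face-stacking (Lemma~\ref{non-ham-lem}) produces triangulations on $3t-4$ vertices with circumference $\le 2t$, which for $\ell=11$ (take $t=5$) gives exactly $n_0=11=\ell$ with circumference $\le 10$ --- one vertex short of what the cut-vertex method needs --- and for $\ell=12$ gives $n_0=12=\ell$ with circumference $\le 11$, again one short. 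Your claim that ``tracking the implicit constants \ldots pins the threshold at $\ell\ge 11$'' is therefore not substantiated: carried out honestly, the cut-vertex route would give a strictly worse threshold (roughly $\ell\ge 13$) unless you either find a better gadget at those two values or switch to the paper's denser substitution scheme. The padding-to-exact-$n$ step also needs care: a small patch on $p$ vertices contributes at most $3p-6$ edges while still contributing $p$ to $n$, so you should verify that the slack survives for all residues $p\bmod(n_0-1)$; the paper sidesteps this by deleting internal degree-$3$ vertices from some gadgets rather than adding a patch.
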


We prove the first statement of Theorem~\ref{over-thm} in
Section~\ref{first-sec}, and sketch a proof of the second statement in
Section~\ref{second-sec}.  Our constructions modify that outlined by Ghosh et
al.~\cite{GGMPX}. The main building blocks, which we call \emph{gadgets},
are triangulations, in which every cycle has length less than $\ell$. 
Clearly, a set of vertex-disjoint gadgets will have no $C_{\ell}$.
To increase the average degree,
we can identify vertices on the outer faces of these gadgets as long as
we avoid creating cycles.  We can also allow ourselves to create cycles
among the gadgets as long as each created cycle has length more than $\ell$.
So we must find the way to do this most efficiently.  

Our notation is standard, but for completeness we record a few things
here.  We let $e(G)$ and $n(G)$ denote the numbers of edges and vertices in a
graph $G$.  We write $C_{\ell}$ for a cycle \mbox{of length $\ell$.}

\section{Disproving the Conjecture: a First Construction}
\label{first-sec}

To disprove Conjecture~\ref{main-conj}, we start with
a planar graph in which each face has length $\ell+1$ (and each cycle has
length at least $\ell+1$), and then we ``substitute'' a gadget for each vertex.  
As a first step, we construct
the densest planar graphs with a given girth $g$, for
each fixed $g\ge 6$.
We will also need that our dense graphs have maximum degree 3, as we require
in the following definition.

\begin{defn}
If $G$ is a planar graph of girth $g$ with each vertex of degree 2 or 3, 
and $e(G)=\frac{g}{g-2}(n-2)$, then $G$ is a \emph{dense graph of girth
$g$}\aaside{dense graph}{-5mm}.
\end{defn}

An easy counting argument shows that if $G$ is an $n$-vertex dense graph of
girth $g$, where $n=(5g-10)\frac{k}{2}-g+4$ (for some positive even integer
$k$), then $G$ has $10k-8$ vertices of degree 3 and all other vertices of
degree 2.

\begin{lem}
Fix an integer $g\ge 3$.  
If $G$ is an $n$-vertex planar graph with girth $g$, then $e(G)\le
\frac{g}{g-2}(n-2)$.  For each $g\ge 6$, this bound holds with equality
infinitely often; specifically, it holds with equality if $k$ is a positive even
integer and $n=(5g-10)\frac{k}{2}-g+4$.  In fact, for each such $k$ and $n$,
there exists a graph $G$ that attains this bound and that has every vertex of degree 2
or 3.
\label{dense-lem}
\end{lem}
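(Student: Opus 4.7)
For the upper bound I would apply Euler's formula in the standard way: every face of a girth-$g$ planar graph has length at least $g$, so $2e=\sum_f\ell(f)\ge gf$; combining with $n-e+f=2$ to eliminate $f$ yields $e\le\frac{g}{g-2}(n-2)$, with equality exactly when every face is a $g$-cycle.

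For the sharpness claim I plan to construct, for each $g\ge 6$ and each positive even integer $k$, a planar graph on $n=(5g-10)k/2-g+4$ vertices with every face a $g$-cycle and every vertex of degree $2$ or $3$. By the counting observation stated just before the lemma, such a graph must have $10k-8$ degree-$3$ vertices and $\frac{5(g-6)k}{2}+12-g$ degree-$2$ vertices. My strategy is to fix a $3$-regular planar ``skeleton'' $H$ on $10k-8$ vertices --- which by Euler automatically has $15k-12$ edges and $5k-2$ faces, each of length at most $g$ --- and subdivide each edge $e$ of $H$ by $s(e)\ge 0$ new degree-$2$ vertices so that every face of the resulting graph becomes a $g$-cycle. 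Summing the constraints $\sum_{e\in\partial f}s(e)=g-\ell_H(f)$ over all faces gives $2\sum_e s(e)=gf_H-2e_H=5k(g-6)+24-2g$, exactly twice the required number of subdivision vertices, so once a valid non-negative integer assignment $s$ is produced, the vertex, edge and face totals match $e=\frac{g}{g-2}(n-2)$ automatically.

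I would build the family inductively in $k$. In the base case $k=2$ (so $|V(H)|=12$) I would exhibit the skeleton and subdivision by hand: for $g=6$ the hexagonal prism works, with each vertical edge subdivided once so that every $4$-face becomes a $6$-cycle while the two $6$-faces are untouched; for general $g\ge 6$ an analogous cubic planar graph on $12$ vertices (e.g., a prism over a cycle with appropriately chosen non-uniform subdivisions of its ``vertical'' and ``horizontal'' edges) admits a valid $s$. For the step $k\to k+2$ I would graft a ``handle'' that adds exactly $5(g-2)$ vertices ($20$ of degree $3$ and $5(g-6)$ of degree $2$), $5g$ edges, and $10$ new $g$-faces --- for instance by inserting a suitable collection of parallel paths into the current graph between matched pairs of vertices, so that each of the $10$ new faces is a $g$-cycle and every existing face is preserved.

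The main obstacles are (i) designing the base skeleton and handle precisely enough that the face-length constraints have non-negative integer solutions for every $g$, and (ii) verifying that the final graph has girth exactly $g$ rather than less. For (ii) I would rely on the planar fact that in a $2$-edge-connected planar graph whose faces all have length at least $g$, every cycle has length at least $g$: by induction on the number of faces enclosed by a cycle $C$, the one-enclosed-face case forces $C$ to be the boundary of that face (so $|C|\ge g$), and the multi-enclosed-face case reduces by deleting an internal edge, merging two faces of length $\ge g$ into a single face of length $\ge 2g-2\ge g$. Thus the girth verification is automatic, and the substantive task is (i).
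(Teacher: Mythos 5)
Your upper bound argument via Euler's formula is exactly the paper's, and your idea of producing the extremal examples by subdividing a bounded-degree planar skeleton so that every face has length exactly $g$ is also essentially what the paper does. But there is a genuine gap, and it is precisely at the point you flag as ``automatic.''

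The ``planar fact'' you invoke --- that a $2$-edge-connected plane graph whose faces all have length at least $g$ must have girth at least $g$ --- is \emph{false}. By planar duality, the girth of $G$ equals the edge-connectivity of $G^*$, while the minimum face length of $G$ equals the minimum degree of $G^*$; since $\kappa'(G^*)\le\delta(G^*)$ always, one only gets girth $\le$ min face length, not $\ge$. A concrete counterexample: take a triangle $abc$; inside it place a $6$-cycle $u_1\cdots u_6$ joined to the triangle by paths $a\,w_1\,u_1$, $b\,w_2\,u_3$, $c\,w_3\,u_5$; outside it place a mirror-image structure. Every face then has length $6$ or $7$, the graph is $2$-connected, yet $abc$ is a cycle of length $3$. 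Your induction also breaks for the same reason: after deleting internal edges the single remaining face inside $C$ may have dangling trees hanging off $C$, so its boundary walk is $C$ plus those trees traversed twice, and $|C|$ can be much shorter than the face length. In fact this failure surfaces inside your own construction: in the hexagonal prism, the cycles $a_i a_{i+1} a_{i+2} b_{i+2} b_{i+1} b_i$ are non-facial $6$-cycles, and with an assignment $s$ that happens to put $0$ on all six of their edges (which the face constraints alone permit) the subdivided graph has a $6$-cycle even though every face has length $g>6$.

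So the girth verification is not a formality --- it is the substantive part of the proof. The paper handles it by subdividing via a carefully chosen set $M$ of edges, with lengths $\lfloor(g+1)/6\rfloor$ off $M$ and $g-5\lfloor(g+1)/6\rfloor$ on $M$, and then explicitly checking (via symmetric differences of facial cycles) that every non-facial cycle of the unsubdivided graph has length at least $10$, or at least $8$ with an $M$-edge. Your plan would need a comparable explicit argument: you must choose $s$ not just to satisfy the face-length equations, but also to make $\sum_{e\in C}s(e)\ge g-|C|$ for every non-facial cycle $C$ of the skeleton, and then verify that the inductive handle-attachment preserves this. The second gap --- that the base skeleton for general $g$ and the inductive step are stated only at the level of intent, with no verified construction --- is real but secondary; the false girth shortcut is the more serious issue, since it hides exactly the work the paper is doing.
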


\begin{proof}[Proof of Lemma~\ref{dense-lem}.]
Let $G$ be an $n$-vertex planar graph with girth $g$.  
Denote by $n$, $e$, and $f$ the numbers of vertices, edges, and faces in $G$.
Every face boundary contains a cycle,\footnote{To see this, form $G'$ from $G$
by deleting all cut-edges.  Since each component of $G'$ is 2-connected,
each face boundary \emph{is} either a cycle or a disjoint union of cycles (if $G'$ is disconnected).  Note that each face boundary in $G$
contains all edges of a face boundary in $G'$.}
so every face boundary has length at least $g$.  Thus, $2e\ge gf$.  Substituting into
Euler's formula and simplifying gives the desired bound: $e\le
\frac{g}{g-2}(n-2)$.

Now we construct graphs for which the bound holds with equality.
Before giving our full construction, we sketch a simpler construction which has the desired properties except that it has maximum degree 6 (rather than each degree
being 2 or 3, as we require).  Begin with a 4-connected $n$-vertex planar
triangulation with maximum degree 6.  We will find a set
$M$ of edges such that every triangular face contains exactly one edge in $M$.
To see that such a set exists, we consider the planar dual $G^*$.  Since $G$ is a triangulation and 2-connected, $G^*$ is $3$-regular.
By Tutte's Theorem, $G^*$ contains a perfect matching $M^*$ (in fact, this was
proved earlier by Petersen).  The set $M$ of edges in $G$ corresponding to the edges
of $M^*$ in $G^*$ has the desired property: each triangle of $G$ contains exactly one
edge of $M$.

To get the desired graph $G'$ with each face of length $g$, 
we replace each edge of $G$ not in $M$ with a path of
length $\lfloor (g+1)/3\rfloor$ and replace each edge of $G$ in $M$ with a path of
length $g-2\lfloor (g+1)/3\rfloor$.  Now each face of $G'$ has length $2\lfloor
(g+1)/3\rfloor+(g-2\lfloor (g+1)/3\rfloor)=g$.  Thus, for $G'$ the inequality
$2e(G')\ge gf(G')$ in the initial paragraph holds with equality.  So
$e(G')=\frac{g}{g-2}(n(G')-2)$.
Since each non-facial cycle of $G$ has length at least 4, %it is easy to check that 
each non-facial cycle of $G'$ has length at least $g$.

Now we show how to also guarantee that each vertex of $G'$ has degree 2 or 3.
The construction is similar, except that it starts from a particular planar graph $G$ with
every face of length 6 and every vertex of degree 2 or 3.  Again, we find a
subset $M$ of edges such that each face of $G$ contains exactly one edge of $M$.
To form $G'$ from $G$, we replace each edge not in $M$ with a path of length
$\lfloor(g+1)/6\rfloor$ and we replace each edge in $M$ with a path of length
$g-5\lfloor(g+1)/6\rfloor$.  Thus, each face of $G'$ has length exactly
$5\lfloor(g+1)/6\rfloor + (g-5\lfloor(g+1)/6\rfloor)=g$.  

It will turn out that each non-facial cycle of $G$ has either (i) length at
least 10 or (ii) length at least 8 and at least one edge in $M$. 
The corresponding non-facial cycle in $G'$ thus has length at least $g$. 
In Case (ii) this follows from the calculation in the previous paragraph. 
In Case (i), when $g \ge 10$ this holds because $10 \lfloor(g+1)/6\rfloor \ge
10 (g-4)/6 \ge g$.
So consider Case (i) when $g\le 9$.
Since each path in $G'$ replacing an edge in $G$ has length at least 1, each
non-facial cycle in $G'$ has length at least 10, which is at least $g$ since $g\le 9$.  
Thus, what remains is to construct our graph $G$, specify the set of
edges $M$, and check that each non-facial cycle in $G$ either has length at least 10 or has length 8 and includes an edge in $M$.

\def\r{1.0}
\def\sx{0.0}
\def\sy{0.0}
\def\nx{6}
\def\ny{4}
\def\hnxl{2.5}
\def\hnxh{3.5}
\def\matchingwidth{1.0mm}
\def\matchingcol{blue!70!white}

\begin{figure}[!t]
\centering

\begin{tikzpicture}[baseline, scale=0.50,
nodelabel/.style={fill=none,draw=none}, very thick]

%Grid edges
\foreach \x in {1,2,...,\nx} %5
{
\foreach \y in {1,2,...,\ny} %4
{
\if\x3
\else
\draw (\sx+\y*\r*0.866+\x*2*\r*0.866+\r*0.866,\sy-\y*1.5*\r+\r*0.5) -- (\sx+\y*\r*0.866+\x*2*\r*0.866+\r*0.0,\sy-\y*1.5*\r+\r*1.0);
\draw (\sx+\y*\r*0.866+\x*2*\r*0.866+\r*0.0,\sy-\y*1.5*\r+\r*1.0) -- (\sx+\y*\r*0.866+\x*2*\r*0.866-\r*0.866,\sy-\y*1.5*\r+\r*0.5);
\draw (\sx+\y*\r*0.866+\x*2*\r*0.866-\r*0.866,\sy-\y*1.5*\r+\r*0.5) -- (\sx+\y*\r*0.866+\x*2*\r*0.866-\r*0.866,\sy-\y*1.5*\r-\r*0.5);
\draw (\sx+\y*\r*0.866+\x*2*\r*0.866-\r*0.866,\sy-\y*1.5*\r-\r*0.5) -- (\sx+\y*\r*0.866+\x*2*\r*0.866+\r*0.0,\sy-\y*1.5*\r-\r*1.0);
\draw (\sx+\y*\r*0.866+\x*2*\r*0.866+\r*0.0,\sy-\y*1.5*\r-\r*1.0) -- (\sx+\y*\r*0.866+\x*2*\r*0.866+\r*0.866,\sy-\y*1.5*\r-\r*0.5);
\draw (\sx+\y*\r*0.866+\x*2*\r*0.866+\r*0.866,\sy-\y*1.5*\r-\r*0.5) -- (\sx+\y*\r*0.866+\x*2*\r*0.866+\r*0.866,\sy-\y*1.5*\r+\r*0.5);
\fi
}
}

%Left curved edges
\foreach \x in {1,2,...,\hnxl}
{
\draw (\sx-\x*2*\r*0.866,\sy-\r*0.5) to [out=90,in=90] (\sx+\x*2*\r*0.866+\r*0.866,\sy-\r*0.5);
\draw (\sx-\x*2*\r*0.866,\sy-\r*0.5) -- (\sx-\x*2*\r*0.866,\sy-\ny*\r*1.5-\r);
\draw (\sx-\x*2*\r*0.866,\sy-\ny*\r*1.5-\r) to [out=270,in=270] (\sx+\ny*\r*0.866+\x*2*\r*0.866,\sy-\ny*\r*1.5-\r);
}
%\draw (\sx+2*\r*0.866,\sy-\r) to [out=180,in=180] (\sx+\ny*\r*0.866,\sy-\ny*\r*1.5+\r);

%Right curved edges
\foreach \x in {1,2,...,\hnxh}
{
\draw (\sx+\nx*2*\r*0.866+\ny*\r*0.866-\x*2*\r*0.866-\r*0.866,\sy-\r*0.5) to [out=90,in=90] (\sx+\nx*2*\r*0.866+\ny*\r*0.866+\x*2*\r*0.866+2.5*\r*0.866,\sy-\r*0.5);
\draw (\sx+\nx*2*\r*0.866+\ny*\r*0.866+\x*2*\r*0.866+2.5*\r*0.866,\sy-\r*0.5) -- (\sx+\nx*2*\r*0.866+\ny*\r*0.866+\x*2*\r*0.866+2.5*\r*0.866,\sy-\ny*\r*1.5-\r);
\draw (\sx+\nx*2*\r*0.866+\ny*\r*0.866-\x*2*\r*0.866+2*\r*0.866,\sy-\ny*\r*1.5-\r) to [out=270,in=270] (\sx+\nx*2*\r*0.866+\ny*\r*0.866+\x*2*\r*0.866+2.5*\r*0.866,\sy-\ny*\r*1.5-\r);
}
%\draw (\sx+\nx*2*\r*0.866+\ny*\r*0.866-\r*0.866,\sy-\r*2.5) to [out=0,in=0] (\sx+\nx*2*\r*0.866+\ny*\r*0.866+\r*0.866,\sy-\ny*\r*1.5-\r*0.5);

%Labels
\draw (\sx+3.2*\r*0.866,\sy-.475*\r)node[left, inner sep=8pt,
nodelabel]{\footnotesize{$v_{1, 2}$}};
\draw (\sx+6.15*\r*0.866,\sy-7.15*\r)node[left, inner sep=4pt,
nodelabel]{\footnotesize{$v_{1, 10}$}};
\draw (\sx+\nx*2*\r*0.866+\ny*\r*0.866-3*\r*0.866,\sy-\r*0.5)node[right, inner
sep=4pt, nodelabel]{\footnotesize{$v_{k, 1}$}};
\draw (\sx+\nx*2*\r*0.866+\ny*\r*0.866+\r*0.866-1.05,\sy-7.15*\r)node[right,
inner sep=8pt, nodelabel]{\footnotesize{$v_{k, 9}$}};

%Colors
%Grid edges
\foreach \x in {1,2,...,\nx}
{
\foreach \y in {1,3,...,\ny}
{
\if\x3
\else
\draw[\matchingcol,line width=\matchingwidth] (\sx+\y*\r*0.866+\x*2*\r*0.866+\r*0.0,\sy-\y*1.5*\r-\r*1.0) -- (\sx+\y*\r*0.866+\x*2*\r*0.866+\r*0.866,\sy-\y*1.5*\r-\r*0.5);
\fi
}
}

%Left curved edges
\draw[\matchingcol,line width=\matchingwidth] (\sx+2*\r*0.866,\sy-\r)
to [out=200,in=180,looseness=1.8] (\sx+\ny*\r*0.866,\sy-\ny*\r*1.5+\r);
%\foreach \x in {2,4,...,\hnxl}
\foreach \x in {2,...,2}
{
\draw[\matchingcol,line width=\matchingwidth] (\sx-\x*2*\r*0.866,\sy-\r*0.5) to [out=90,in=90] (\sx+\x*2*\r*0.866+\r*0.866,\sy-\r*0.5);
\draw[\matchingcol,line width=\matchingwidth] (\sx-\x*2*\r*0.866,\sy-\r*0.5) -- (\sx-\x*2*\r*0.866,\sy-\ny*\r*1.5-\r);
\draw[\matchingcol,line width=\matchingwidth] (\sx-\x*2*\r*0.866,\sy-\ny*\r*1.5-\r) to [out=270,in=270] (\sx+\ny*\r*0.866+\x*2*\r*0.866,\sy-\ny*\r*1.5-\r);
}

%right curved edges
\draw[\matchingcol,line width=\matchingwidth]
(\sx+\nx*2*\r*0.866+\ny*\r*0.866-\r*0.866,\sy-\r*2.5) to [out=0,in=20,
looseness=2.1] (\sx+\nx*2*\r*0.866+\ny*\r*0.866+\r*0.866,\sy-\ny*\r*1.5-\r*0.5);
%\foreach \x in {2,4,...,\hnxh}
\foreach \x in {2,...,2}
{
\draw[\matchingcol,line width=\matchingwidth] (\sx+\nx*2*\r*0.866+\ny*\r*0.866-\x*2*\r*0.866-\r*0.866,\sy-\r*0.5) to [out=90,in=90] (\sx+\nx*2*\r*0.866+\ny*\r*0.866+\x*2*\r*0.866+2.5*\r*0.866,\sy-\r*0.5);
\draw[\matchingcol,line width=\matchingwidth] (\sx+\nx*2*\r*0.866+\ny*\r*0.866+\x*2*\r*0.866+2.5*\r*0.866,\sy-\r*0.5) -- (\sx+\nx*2*\r*0.866+\ny*\r*0.866+\x*2*\r*0.866+2.5*\r*0.866,\sy-\ny*\r*1.5-\r);
\draw[\matchingcol,line width=\matchingwidth] (\sx+\nx*2*\r*0.866+\ny*\r*0.866-\x*2*\r*0.866+2*\r*0.866,\sy-\ny*\r*1.5-\r) to [out=270,in=270] (\sx+\nx*2*\r*0.866+\ny*\r*0.866+\x*2*\r*0.866+2.5*\r*0.866,\sy-\ny*\r*1.5-\r);
}

%Grid nodes
\foreach \x in {1,2,...,\nx} %5
{
\foreach \y in {1,2,...,\ny} %4
{
\if\x3
%Ellipses
\draw (\sx+\y*\r*0.866+\x*2.03*\r*0.866+\r*0.0,\sy-\y*1.5*\r)node[inner sep=0pt, nodelabel]{\ldots};
\else
%Nodes
\draw (\sx+\y*\r*0.866+\x*2*\r*0.866+\r*0.866,\sy-\y*1.5*\r+\r*0.5)node{};
\draw (\sx+\y*\r*0.866+\x*2*\r*0.866+\r*0.0,\sy-\y*1.5*\r+\r*1.0)node{};
\draw (\sx+\y*\r*0.866+\x*2*\r*0.866-\r*0.866,\sy-\y*1.5*\r+\r*0.5)node{};
\draw (\sx+\y*\r*0.866+\x*2*\r*0.866-\r*0.866,\sy-\y*1.5*\r-\r*0.5)node{};
\draw (\sx+\y*\r*0.866+\x*2*\r*0.866+\r*0.0,\sy-\y*1.5*\r-\r*1.0)node{};
\draw (\sx+\y*\r*0.866+\x*2*\r*0.866+\r*0.866,\sy-\y*1.5*\r-\r*0.5)node{};
\if\x4
\else
%Facial cycles
\ifnum\x<3
\pgfmathtruncatemacro\k{\x*5-5+\y+1}
\draw (\sx+\y*\r*0.866+\x*2*\r*0.866+\r*0.0,\sy-\y*1.5*\r)node[inner sep=0pt,
nodelabel]{\footnotesize{$C_{\k}$}};
    \if\x1
    \if\y4
    \draw (\sx+5*\r*0.866+\x*2*\r*0.866+\r*0.0,\sy-5*1.5*\r)node[inner sep=0pt,
nodelabel]{\footnotesize{$C_6$}};
    \fi
    \fi
\fi
\fi
\fi
}
}

%Nodes with labels boldfaced
\draw (\sx+1*\r*0.866+1*2*\r*0.866-\r*0.866,\sy-1*1.5*\r+\r*0.5) node[minimum size = 6.5pt]{};
\draw (\sx+5*\r*0.866+1*2*\r*0.866-\r*0.866,\sy-5*1.5*\r+\r*0.5) node[minimum size = 6.5pt]{};
\draw (\sx+1*\r*0.866+6*2*\r*0.866+\r*0.0,\sy-1*1.5*\r+\r*1.0)   node[minimum size = 6.5pt]{};
\draw (\sx+5*\r*0.866+6*2*\r*0.866-\r*0.0,\sy-5*1.5*\r+\r*1.0)   node[minimum size = 6.5pt]{};

%End facial cycles
\draw (\sx+4*\r*0.866+1*2*\r*0.866+\r*0.0-4*\r*0.866,\sy-4*1.5*\r)node[inner
sep=0pt, nodelabel]{\footnotesize{$C_0$}};
\draw (\sx+4*\r*0.866+1*2*\r*0.866+\r*0.0-4*\r*0.866,\sy-2.2*1.5*\r)node[inner
sep=0pt, nodelabel]{\footnotesize{$C_1$}};
\draw (\sx+1*\r*0.866+6*2*\r*0.866+\r*0.0+4.5*\r*0.866,\sy-1*1.5*\r)node[inner
sep=0pt, nodelabel]{\footnotesize{$C_{5k-3}$}};
\draw (\sx+1*\r*0.866+6*2*\r*0.866+\r*0.0+4.5*\r*0.866,\sy-3*1.5*\r)node[inner
sep=0pt, nodelabel]{\footnotesize{$C_{5k-4}$}};

\end{tikzpicture}

\caption{The planar graph $G_k$ has $10k-2$ vertices, $15k-6$ edges, and every
face of length 6. Every vertex of $G_k$ has degree 2 or 3 and every non-facial
cycle either (i) has length at least 10 or (ii) has length 8 and includes a blue
edge. The set of blue edges intersects every face exactly once.}

\end{figure}
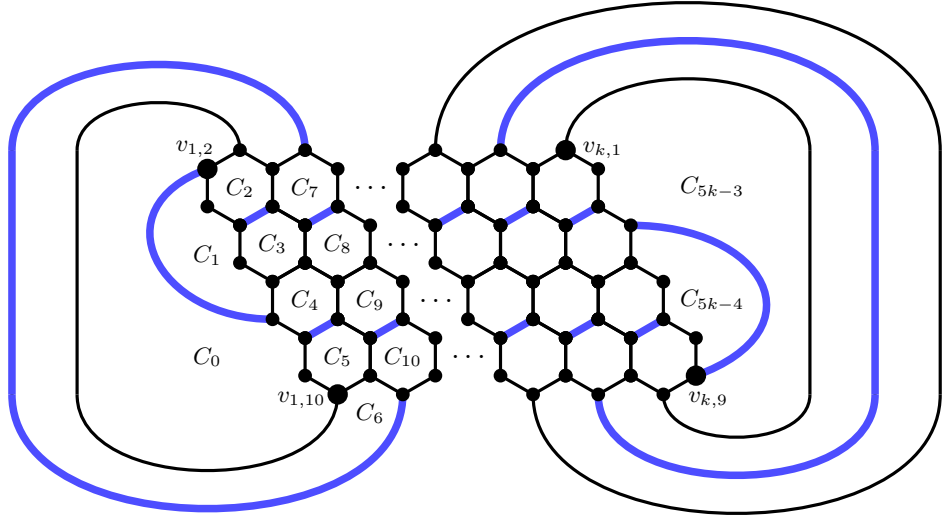

We construct an infinite family of planar graphs $G_k$ on $10k-2$ vertices, with
$5k-2$ faces (each of length 6), and with all vertices of degree 2 or 3; here
$k$ is an arbitrary positive even integer.  Figure~1 shows $G_k$.  (By
Euler's formula, each $G_k$ has 6 vertices of degree 2 and $10k-8$
vertices of degree 3.)  Each of
$k$ ``diagonal columns'' contains 10 vertices, except for the first and last,
which each contain one vertex fewer.  We write $v_{i,j}$ to denote the $j$th
vertex down from the top in column $i$, except that we start column 1 with
$v_{1,2}$.  So $V=\{v_{i,j}~|~1\le i\le k,~1\le j\le 10,
(i,j)\notin\{(1,1),(k,10)\}\}$.
The edge set consists of the boundary
cycles of $4(k-1)$ 6-faces in the hexagonal grid, $k-1$ ``curved edges''
$v_{i,1}v_{i-1,10}$, when $2\le i\le k$, as well two ``end edges'' $v_{1,2}v_{1,7}$ and
$v_{k,4}v_{k,9}$.  The matching $M$ contains $v_{i,4}v_{i+1,3}$ and
$v_{i,8}v_{i+1,7}$ when $1\le i\le k-1$, edge $v_{i,1}v_{i-1,10}$ for
each odd $i\ge 3$ if $k\ge 4$, and the end edges $v_{1,2}v_{1,7}$ and $v_{k,4}v_{k,9}$. 
It is easy to check that the only vertices with degree 2 are $v_{1,3}, v_{1,5},
v_{1,9}, v_{k,2}, v_{k,6}, v_{k,8}$; the remaining $10k-8$ vertices all
have degree 3.

We now show that every non-facial cycle has either (i) length at least 10 or
(ii) length at least 8 and at least one edge in $M$. 
We denote by $C_2, C_3, \ldots, C_{5k-5}$ the facial cycles that do not use any
end-edge.  
Informally, $C_2$ is the ``top left'' of these (containing $v_{1, 2}$), and
subscripts increase as we move down the first diagonal and 
then wrap around toroidally with the facial cycle containing $v_{1, 10}$ and 
two curved edges (see Figure~1), and continue on to the facial cycle containing $v_{k, 9}$. 
Formally, each of these is $C_k$, where $X$ denotes its vertex
set and $k := \max\{j/2: v_{i, j} \in X\} + 5 * \min\{i-1: v_{i, j} \in X\} +
(|\{i: v_{i, j} \in X\}|-2)$. 
The facial cycles containing the left end-edge are $C_0$ and $C_1$, and 
those containing the right end-edge are $C_{5k-4}$ and $C_{5k-3}$.

Note that the edge-set of any non-facial cycle $C$
is the symmetric difference of the edge-sets of the facial cycles 
``inside'' (or ``outside'') of $C$.  Consider first a non-facial cycle $C$
that does not contain any end-edge.  Pick the side of $C$ that
does not contain the right end-edge; take the symmetric difference of the
edge-sets of the facial cycles on this 
side incrementally, in order of increasing subscripts. The symmetric difference 
of the first two facial cycles has size at least 10 and this size never
decreases.  Now consider the non-facial 
cycles that contain exactly one end-edge; by (rotational) symmetry, assume it is
the left end-edge. For these cycles, take 
the symmetric difference incrementally as above for the side not containing the 
right end-edge; the symmetric difference of the first two facial cycles has
size at least 8 and again this size never decreases. 

Finally, consider a non-facial cycle $C$ that 
contains both end-edges.  Now take the symmetric difference incrementally 
as above for the side of $C$ that includes $C_1$; the size of the symmetric difference
is now initially at least 8, and never decreases until the final facial cycle ($C_{5k-4}$ or 
$C_{5k-3}$) is added and the symmetric difference is complete. The final
facial cycle $C'$ may reduce the size of the symmetric difference by at most 4, 
but the final symmetric difference still has size at least 12
(due to the position of $C'$ relative to $C_1$, and the fact that $k \geq 2$). 
%}

To finish the proof, we should verify that $|V(G')|=(5g-10)\frac{k}{2}-g+4$, as claimed.
By construction, each vertex of $G'$ has degree 2 or 3.  Each vertex with
degree 3 in $G'$ also has degree 3 in $G$, and we have exactly $10k-8$ of
these.  Let $n$, $e$, and $f$ denote the numbers of vertices, edges, and
faces in $G'$.  Now summing degrees gives
$$
    3(10k-8)+ 2 (n-(10k-8))=2e=gf=\frac{g}{g-2} (2n-4),
$$
where the last two equalities hold as 
at the start of the proof.
Thus, $n=(5g-10)\frac{k}{2}-g+4$.
\end{proof}

\begin{defn}
\label{sub-defn}
Let $G$ be a 2-connected plane graph, with every vertex of degree 2 or 3.
Let $B$ be a plane graph with 3 vertices specified on its outer face.
To \EmphE{substitute $B$ into $G$}{-4mm} we do the following.  Subdivide every edge of
$G$.  For each vertex $v$ in $G$, delete $v$ from the subdivided graph and
identify $d(v)$ vertices on the outer face of a copy of $B$ with the neighbors
of $v$ in the subdivided graph.
\end{defn}

Now we consider the result of substituting $B$ into $G$, as in
Definition~\ref{sub-defn}.
\begin{lem}
\label{lem:construction}
Let $G$ be a plane graph; denote by $n_2$ and $n_3$ the numbers of vertices
with degree 2 and 3 in $G$.  Let $B$ be a plane graph with $n_B$ vertices and $e_B$
edges,  and with 3 vertices specified on its outer face. 
Form $G'$ by substituting $B$ into $G$.  Now $e(G') = (n_2+n_3)e_B$ and $n(G') =
n_2(n_B-1)+n_3(n_B-3/2)$.  Further, if $G$ has no cycle of length $\ell$ or
shorter, and $B$ has no cycle of length $\ell$, then $G'$ has no cycle of length $\ell$.
\end{lem}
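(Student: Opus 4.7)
The plan is to handle the two numerical identities by straightforward bookkeeping and to address the cycle-free claim by a segment-decomposition argument. I would first set up the structural picture: $G'$ is assembled from $n(G)=n_2+n_3$ copies of $B$, one copy $B_v$ per vertex $v\in V(G)$, glued together along ``portal'' vertices. Each portal sits at the midpoint of a subdivided edge $uv$ of $G$ and is identified with exactly one specified outer-face vertex of $B_u$ and one of $B_v$; distinct copies of $B$ share only portals and no edges. From this picture, $e(G')=(n_2+n_3)e_B$ is immediate. For the vertex count I would use that each $B_v$ contributes $n_B-d(v)$ non-portal vertices and that $G$ contributes one portal per edge, giving $n(G')=(n_2+n_3)n_B-e(G)$; combining with $e(G)=n_2+\tfrac{3}{2}n_3$ from the handshake identity yields the claimed formula.

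For the absence of $C_\ell$, I would let $C$ be any cycle of $G'$ and decompose its edge set into maximal segments $P_1,\dots,P_k$, where each $P_i$ lies in a single copy $B_{v_i}$. If $k=1$, then $C$ is itself a cycle of $B$, so $|C|\ne\ell$ by hypothesis. If $k\ge 2$, consecutive segments meet at portals, each encoding an edge $v_iv_{i+1}$ of $G$. The plan is to argue that, because $C$ is a simple cycle, the traversed portals along $C$ are all distinct, so the edges $v_1v_2,\dots,v_kv_1$ are distinct in $G$ and $v_1,\dots,v_k,v_1$ is a closed trail in $G$. Every nontrivial closed trail contains a cycle of no greater length, and by hypothesis every cycle of $G$ has length at least $\ell+1$, so $k\ge\ell+1$. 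Since each segment contributes at least one edge, this yields $|C|\ge k\ge\ell+1>\ell$.

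The main thing to verify carefully is that the induced closed walk in $G$ really is a trail: two visits to the same edge of $G$ along $C$ would force two visits to that edge's unique portal in $G'$, which is ruled out by $C$ being a simple cycle. Once this point is granted, the cycle statement follows cleanly, and the two counts reduce to bookkeeping together with the handshake identity.
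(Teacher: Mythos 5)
Your proposal is correct and follows essentially the same approach as the paper: count edges by noting the copies of $B$ are edge-disjoint, count vertices by accounting for the shared portal vertices (the paper apportions $n_B-1$ and $n_B-3/2$ per vertex directly, you count $(n_2+n_3)n_B-e(G)$ and use the handshake lemma, which is equivalent), and handle cycles by projecting down to $G$. One point worth noting: where the paper tersely states that a cycle $C'$ spanning several copies ``maps to a cycle $C$ in $G$,'' the projected object is really only a closed walk; your version is more careful here, observing that distinctness of the transition portals forces the projected walk to be a closed \emph{trail}, which then contains a genuine cycle of $G$ of length at most $k\le |C'|$. This fills in a small gap in the paper's phrasing while reaching the same conclusion by the same route.
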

\begin{proof}
Each vertex in $G$ gives rise to an edge-disjoint copy of $B$ in $G'$; thus
$e(G')=(n_2+n_3)e_B$.  Each vertex of degree 2 in $G$ contributes $n_B-1$ vertices to
$G'$, since exactly two of its vertices lie in two copies of $B$ in $G'$ (and
all others vertices lie in one copy of $B$).  Similarly, each vertex of degree 3 in $G$
contributes $n_B-3/2$ vertices to $G'$.  Finally, assume $G$ and $B$ satisfy the
hypotheses on the lengths of their cycles.  Now consider a cycle $C'$ in $G'$.
If $C'$ is contained entirely in one copy of $B$, then $C'$ has length not equal to $\ell$.  If $C'$ visits two or more copies of $B$, then $C'$ maps to a cycle
$C$ in $G$ with length no longer than the length of $C'$.  Since each cycle in $G$ has length longer than $\ell$, we are done.
\end{proof}

Now suppose that we plan to substitute some plane graph $B$ into a dense planar
graph of girth $\ell+1$.  Which $B$ should we choose?  Since $B$ must not
contain any $\ell$-cycle, a natural choice is a triangulation of order $\ell-1$.
Indeed, every such $B$ yields a graph that attains the bound in
Conjecture~\ref{main-conj}.  This is Corollary~\ref{obs1}, which follows from our next lemma.

\begin{lem}
\label{obs}
Let $G$ be a dense graph of girth $\ell+1$. Form $G'$ by substituting into $G$
a plane graph $B$ with 3 vertices specified on its outer face.  Now
$e(G')=\frac{e_B(\ell-1)}{(n_B-1)(\ell-1)-2} \left(n(G')-\frac{2(\ell+1)}{\ell-1}\right)$, where $e_B=e(B)$ and $n_B=n(B)$. 
\end{lem}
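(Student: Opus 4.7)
The plan is to compute $e(G')$ and $n(G')$ directly from Lemma~\ref{lem:construction}, use the definition of a dense graph of girth $\ell+1$ together with the handshake lemma to eliminate one of the variables $n_2,n_3$, and then express the resulting linear relation in the required form. Since every step is a substitution, the main ``obstacle'' is purely bookkeeping: one must keep track of exactly how the dense-graph identity $e(G)=\frac{\ell+1}{\ell-1}(n(G)-2)$ interacts with the degree-sum equation.

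Concretely, first write $n:=n_2+n_3=n(G)$ and recall from Lemma~\ref{lem:construction} that
\[
e(G')=n\,e_B,\qquad n(G')=n_2(n_B-1)+n_3(n_B-\tfrac{3}{2})=n(n_B-1)-\tfrac{n_3}{2}.
\]
Next, since $G$ is dense of girth $\ell+1$, we have $e(G)=\frac{\ell+1}{\ell-1}(n-2)$, and the handshake lemma yields $2e(G)=2n_2+3n_3$, so $n_3=2e(G)-2n$. Combining these two identities gives a closed form
\[
n_3=\frac{4(n-\ell-1)}{\ell-1},
\]
expressing $n_3$ as a linear function of $n$ alone.

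Now substitute this expression for $n_3$ into the formula for $n(G')$ above. After collecting the terms in $n$, one gets
\[
n(G')-\frac{2(\ell+1)}{\ell-1}=n\cdot\frac{(n_B-1)(\ell-1)-2}{\ell-1}.
\]
Solving for $n$ and multiplying by $e_B$ (using $e(G')=n\,e_B$) yields exactly the claimed identity. The only place that requires care is the simplification $n(n_B-1)-\frac{2(n-\ell-1)}{\ell-1}=n\cdot\frac{(n_B-1)(\ell-1)-2}{\ell-1}+\frac{2(\ell+1)}{\ell-1}$; once this is verified the lemma follows by a one-line rearrangement, so I do not anticipate any genuine difficulty beyond this algebraic manipulation.
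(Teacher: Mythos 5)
Your proof is correct. The overall skeleton matches the paper's: invoke Lemma~\ref{lem:construction} to get $e(G')=n\,e_B$ and $n(G')=n(n_B-1)-n_3/2$, eliminate $n_3$, and rearrange. The difference is in how $n_3$ is eliminated. The paper pulls the explicit parametric values $n=(5\ell-5)\tfrac{k}{2}-\ell+3$ and $n_3=10k-8$ from the construction in Lemma~\ref{dense-lem} and says ``substitute and simplify.'' You instead derive the closed form $n_3=\tfrac{4(n-\ell-1)}{\ell-1}$ directly from the definition of a dense graph (namely $e(G)=\tfrac{\ell+1}{\ell-1}(n-2)$) combined with the handshake lemma $2e(G)=2n_2+3n_3=2n+n_3$. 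Your derivation is arguably cleaner and more faithful to the stated hypothesis: the lemma assumes only that $G$ is a dense graph of girth $\ell+1$, so deducing $n_3$ from the definition is more direct than appealing to the particular family built in Lemma~\ref{dense-lem}. Both routes reduce to the same identity, and your algebraic check of $n(n_B-1)-\tfrac{2(n-\ell-1)}{\ell-1}=n\cdot\tfrac{(n_B-1)(\ell-1)-2}{\ell-1}+\tfrac{2(\ell+1)}{\ell-1}$ is exactly right.
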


\begin{proof}
Let $G$ be a dense graph of girth $\ell+1$ on $n$ vertices, and let $n_2$ and
$n_3$ denote, respectively, its numbers of vertices with degree $2$ and $3$. 
Recall from Lemma~\ref{dense-lem} (with $g=\ell+1$) that  $n=(5\ell-5)\frac{k}{2}-\ell+3$
for some even integer $k$, that $n_3=10k-8$, and that $n_2=n-n_3$. 
Lemma~\ref{lem:construction} implies that $e(G')=(n_2+n_3)e_B=ne_B$ and that
$n(G') = n_2(n_B-1)+n_3(n_B-3/2) = (n-n_3)(n_B-1)+n_3(n_B-3/2)
=n(n_B-1)-n_3/2$. Now we show that $e(G')=\frac{e_B(\ell-1)}{(n_B-1)(\ell-1)-2}
(n(G')-\frac{2(\ell+1)}{\ell-1})$.  The final equality comes from substituting
for $n_3$ and simplifying (using that $n=(5\ell-5)\frac{k}2-\ell+3$).
\begin{align*}
\frac{e(G')}{n(G')-\frac{2(\ell+1)}{\ell-1}} &=
\frac{n e_B (\ell-1)}{(n(n_B-1)-n_3/2)(\ell-1)-2(\ell+1)}\\
&=\frac{e_B(\ell-1)}{(n_B-1)(\ell-1)-\frac{n_3(\ell-1)+4(\ell+1)}{2n}}\\
&=\frac{e_B(\ell-1)}{(n_B-1)(\ell-1)-2}.
\end{align*}
\aftermath
\end{proof}

\begin{cor}
\label{obs1}
The bound in Conjecture~\ref{main-conj} holds with equality for each graph
formed by substituting a triangulation on $\ell-1$ vertices into a dense graph
of girth $\ell+1$. 
\end{cor}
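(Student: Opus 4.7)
The plan is to apply Lemma~\ref{obs} directly with $B$ chosen to be any triangulation on $\ell-1$ vertices, and then simplify the resulting formula. First I would record the parameters of $B$: since any planar triangulation on $m$ vertices has $3m-6$ edges, we get $n_B = \ell-1$ and $e_B = 3(\ell-1)-6 = 3\ell-9$. Substituting these into the expression for $e(G')$ from Lemma~\ref{obs}, I expect the coefficient $\frac{e_B(\ell-1)}{(n_B-1)(\ell-1)-2}$ to collapse: both numerator and denominator share a factor of $\ell-3$, and the ratio should simplify to $\frac{3(\ell-1)}{\ell}$. Multiplying the constant term $\frac{2(\ell+1)}{\ell-1}$ from Lemma~\ref{obs} by this coefficient then produces $\frac{6(\ell+1)}{\ell}$, and combining the two terms yields exactly the bound in Conjecture~\ref{main-conj} with equality.

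Second, I would verify that the constructed graph $G'$ is indeed $C_\ell$-free, so that it is a legitimate witness of the bound. By the last sentence of Lemma~\ref{lem:construction}, it suffices to check that $G$ has no cycle of length $\ell$ or shorter and that $B$ contains no $\ell$-cycle. The first holds because $G$ is a dense graph of girth $\ell+1$, and the second is immediate since $B$ has only $\ell-1 < \ell$ vertices and therefore cannot contain any cycle of length $\ell$.

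I do not anticipate any real obstacle: the substantive work has already been carried out in Lemma~\ref{obs}, and the corollary is essentially a one-line substitution followed by an algebraic simplification that hinges on the fortunate cancellation of the factor $\ell-3$. If anything subtle arises, it would be the implicit assumption $\ell \ge 4$ (needed so that a triangulation on $\ell-1$ vertices exists), which is harmless since the conjecture concerns $\ell \ge 7$.
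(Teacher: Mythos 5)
Your proposal is correct and takes essentially the same route as the paper: plug $n_B = \ell-1$ and $e_B = 3\ell-9$ into Lemma~\ref{obs}, cancel the common factor $\ell-3$ from $\frac{3(\ell-3)(\ell-1)}{\ell(\ell-3)}$ to obtain the coefficient $\frac{3(\ell-1)}{\ell}$, and distribute to recover the conjectured bound exactly. The only difference is that you also explicitly check via Lemma~\ref{lem:construction} that $G'$ is $C_\ell$-free; the paper's proof of the corollary omits this (treating the corollary as a purely algebraic identity), but your extra check is harmless and in fact clarifies why the construction is a legitimate witness.
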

\begin{proof}
This follows from the above lemma when $B$ is a plane triangulation on $\ell-1$
vertices, so $n_B=\ell-1$ and $e_B=3(\ell-1)-6=3\ell-9$.  We get
\begin{align*}
\frac{e_B(\ell-1)}{(n_B-1)(\ell-1)-2}&= \frac{3(\ell-3)(\ell-1)}{(\ell-2)(\ell-1)-2}\\
&=\frac{3(\ell-3)(\ell-1)}{\ell^2-3\ell+2-2}\\
&=\frac{3(\ell-1)}{\ell}.
\end{align*}
\aftermath
\end{proof}

To beat the bound of Conjecture~\ref{main-conj}, it will suffice to instead
substitute into a dense graph of girth $\ell+1$ any triangulation with order
larger than $\ell-1$, as long as it has each cycle of length at
most $\ell-1$.  This is because the conjectured average degree is less than 6,
and is attained by substituting a triangulation of order $\ell-1$, as shown in
Corollary~\ref{obs1}.
However, the average degree of a triangulation tends to 6 (from below) as its
order grows.
For each
$\ell\in \{3,\ldots,10\}$, every triangulation on $\ell$ vertices is
Hamiltonian, i.e., it contains an $\ell$-cycle.  But for each $\ell\ge 11$,
there exists a triangulation on $\ell$ vertices with no $\ell$-cycle; this is a
consequence of Lemma~\ref{non-ham-lem}, which we prove next.
(In fact, much more is true, as we show in Section~\ref{denser-sec}.)

\begin{lem}
\label{non-ham-lem}
For every integer $t\ge 5$, there exist a plane triangulation with $3t-4$ vertices and
each cycle of length at most $2t$, and a plane triangulation with $3t-3$
vertices and each cycle of length at most $2t+1$.
\end{lem}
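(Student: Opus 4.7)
The plan is to use the Kleetope construction. For a plane triangulation $T$ on $t$ vertices, let $K(T)$ be the graph obtained by placing, inside each of the $2t-4$ faces of $T$ (including the outer face), one new vertex joined to the three corners of that face. Then $K(T)$ is a plane triangulation on $3t-4$ vertices in which every new vertex has degree $3$ and all three of its neighbors are \emph{old} (i.e., vertices of $T$); in particular, $K(T)$ contains no edge between two new vertices.

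For the first statement, I would bound the circumference of $K(T)$ by a simple double-count. Let $C$ be any cycle in $K(T)$, and write $c_o$ and $c_n$ for the numbers of old and new vertices on $C$. Since each new vertex on $C$ has both its cycle-edges going to old vertices, the number of old--new edges of $C$ equals $2c_n$. Summing the cycle-degrees at the old vertices of $C$ then gives $2c_o = 2E_{oo}(C) + 2c_n$, so $c_o \ge c_n$ and
\[
|C| \;=\; c_o + c_n \;\le\; 2c_o \;\le\; 2t.
\]

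For the second statement, I would modify $K(T)$ by inserting one extra vertex $x$ into an interior face. Every interior face of $K(T)$ has exactly two old corners $a,b$ and one new corner $y$; placing $x$ inside such a face and joining it to $a$, $b$, and $y$ yields a plane triangulation on $3t-3$ vertices whose only new--new edge is $xy$. If a cycle $C$ avoids the edge $xy$, the double-count from the first part applies unchanged and gives $|C| \le 2t$. If $C$ uses $xy$, then both $x$ and $y$ are on $C$, and each contributes only one old--new cycle-edge instead of two, so the count becomes $2c_o = 2E_{oo}(C) + 2c_n - 2$, giving $c_o \ge c_n - 1$ and
\[
|C| \;\le\; 2c_o + 1 \;\le\; 2t+1.
\]

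The substantive observation is the structural one: the only new--new edge in the second construction is $xy$, so the ``deficit'' in the double-count is at most one edge's worth, which is exactly enough to land at $2t+1$ rather than some weaker bound. Verifying that $K(T)$ and its augmentation are genuinely plane triangulations with the claimed vertex counts is routine via Euler's formula, and the hypothesis $t \ge 5$ enters only to ensure the resulting bounds $2t$ and $2t+1$ are strictly smaller than $3t-4$ and $3t-3$, so that the triangulations we build are genuinely non-Hamiltonian.
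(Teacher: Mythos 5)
Your proposal is correct and follows essentially the same approach as the paper: the Kleetope construction, the observation that the new vertices form an independent set, and the one-extra-vertex modification for the $3t-3$ case. The only difference is in level of detail — where the paper says the $3t-3$ case is ``easy to check,'' you carry out the check explicitly via the double-count, correctly noting that the unique new--new edge $xy$ costs at most one in the bound.
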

\begin{proof}
We start with a plane triangulation on $t$ vertices. First we add into the
interior of each face a new vertex, making it adjacent to each vertex on the
face.  Let $A$ denote the set of vertices in the original triangulation, and let
$B$ denote the set of added vertices.  Since $|A|=t$ and $|B|=2t-4$, the
resulting graph $G_1$ has order $3t-4$.  Further, $B$ is an independent set.  Thus,
on every cycle $C$, at least half of the vertices must be from $A$.  Hence, $C$ has
length at most $2|A|=2t$.

Now we obtain $G_2$ by adding a single vertex inside some face of $G_1$. It is easy to check that $G_2$ is a $(3t-3)$-vertex triangulation with each cycle of length at most $2t+1$.
\end{proof}

We have already outlined the proof of our main result.  We let $B$ be a plane
triangulation with no $\ell$-cycle, and with order at least $\ell$, as
guaranteed by Lemma~\ref{non-ham-lem}. We simply substitute $B$ into a dense graph
of girth $\ell+1$.  For completeness, we include more details in the proof of
Theorem~\ref{main1-thm}.

\begin{thm}
\label{main1-thm}
For each $\ell\ge 11$, Conjecture~\ref{main-conj} is false. 
In particular, whenever $k$ is positive if $\ell\ge 11$ and $\ell$ is
odd then,
$\exP(n,C_{\ell})\ge \frac{9(\ell-5)(\ell-1)}{(3\ell-13)(\ell-1)-4}
\left(n-\frac{2(\ell+1)}{\ell-1}\right)$ for
$n=((5\ell-5)\frac{k}{2}-\ell+3)(\frac{3(\ell-1)}{2}-5)-(5k-4)$ 
and if $\ell\ge 11$ and $\ell$ is even, then $\exP(n,C_{\ell})\geq \frac{3(3\ell-16)(\ell-1)}{(3\ell-14)(\ell-1)-4}
\left(n-\frac{2(\ell+1)}{\ell-1}\right)$ for
$n=((5\ell-5)\frac{k}{2}-\ell+3)(3(\frac{\ell}{2}-1)-4)-(5k-4)$. 
\end{thm}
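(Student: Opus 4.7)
The plan is to instantiate the substitution recipe of Lemmas~\ref{dense-lem},~\ref{lem:construction},~\ref{obs}, and~\ref{non-ham-lem} with compatible ingredients. I would fix $\ell\ge 11$ and a positive even integer $k$, and let $G$ be a dense planar graph of girth $\ell+1$ on $n(G)=(5\ell-5)\tfrac{k}{2}-\ell+3$ vertices, guaranteed by Lemma~\ref{dense-lem} (applicable since $\ell+1\ge 12\ge 6$). Then $G$ has $n_3=10k-8$ vertices of degree $3$ and all remaining vertices of degree $2$, so in particular $n_3/2=5k-4$.

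Next I would choose the gadget $B$ to be a plane triangulation with no $\ell$-cycle and with as large an order as possible. The two cases of Lemma~\ref{non-ham-lem} line up with the two parities of $\ell$. For odd $\ell\ge 11$, setting $t=(\ell-1)/2\ge 5$ produces a triangulation $B$ on $n_B=3t-4=(3\ell-11)/2$ vertices whose longest cycle has length at most $2t=\ell-1$. For even $\ell\ge 12$, setting $t=\ell/2-1\ge 5$ produces $B$ on $n_B=3t-3=3\ell/2-6$ vertices whose longest cycle has length at most $2t+1=\ell-1$. In either case $B$ is a triangulation, so $e_B=3n_B-6$, and $B$ is $C_\ell$-free.

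Forming $G'$ by substituting $B$ into $G$ as in Definition~\ref{sub-defn}, Lemma~\ref{lem:construction} gives that $G'$ is also $C_\ell$-free, because the girth of $G$ exceeds $\ell$ and $B$ contains no $\ell$-cycle; the same lemma yields $n(G')=n(G)(n_B-1)-n_3/2=n(G)(n_B-1)-(5k-4)$. A quick check, using $n_B-1=\tfrac{3(\ell-1)}{2}-5$ for odd $\ell$ and $n_B-1=3(\tfrac{\ell}{2}-1)-4$ for even $\ell$, confirms that this matches the advertised expressions for $n$. Then Lemma~\ref{obs}, applied with $e_B=3n_B-6$ and the parity-specific $n_B$, simplifies to $e(G')=\tfrac{9(\ell-5)(\ell-1)}{(3\ell-13)(\ell-1)-4}\bigl(n(G')-\tfrac{2(\ell+1)}{\ell-1}\bigr)$ for odd $\ell$ and the analogous formula with coefficient $\tfrac{3(3\ell-16)(\ell-1)}{(3\ell-14)(\ell-1)-4}$ for even $\ell$; since $\exP(n,C_\ell)\ge e(G')$, the lower bounds in the theorem follow.

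Finally, to confirm that Conjecture~\ref{main-conj} actually fails, I would verify that each leading coefficient strictly exceeds $\tfrac{3(\ell-1)}{\ell}$. After clearing denominators the odd case reduces to $3\ell(\ell-5)>(3\ell-13)(\ell-1)-4$, equivalent to $\ell>9$; the even case reduces to $\ell(3\ell-16)>(3\ell-14)(\ell-1)-4$, equivalent to $\ell>10$. Both hold for the stated ranges, and letting $k\to\infty$ produces arbitrarily large $n$, so the conjectured upper bound is exceeded for all sufficiently large $n$. The main obstacle is bookkeeping rather than combinatorics: matching the parity of $\ell$ to the correct branch of Lemma~\ref{non-ham-lem}, and verifying that the additive term $-(5k-4)$ coming from the degree-$3$ vertices of $G$ combines with the multiplicative factor $(n_B-1)$ to give precisely the stated value of $n$. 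No new graph-theoretic argument is needed beyond invoking the earlier lemmas.
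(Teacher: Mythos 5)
Your proposal is correct and follows essentially the same route as the paper's proof: pick a dense planar graph $G$ of girth $\ell+1$ from Lemma~\ref{dense-lem}, pick the parity-matched triangulation gadget from Lemma~\ref{non-ham-lem} (with $t=(\ell-1)/2$ for odd $\ell$ and $t=\ell/2-1$ for even $\ell$), substitute via Lemma~\ref{lem:construction}, compute $e(G')$ via Lemma~\ref{obs}, and verify that the resulting coefficient strictly exceeds $\tfrac{3(\ell-1)}{\ell}$. Your algebra (including the reductions to $\ell>9$ and $\ell>10$) matches the paper's claim that ``easy algebra implies $a_i>\tfrac{3(\ell-1)}{\ell}$,'' so no gap.
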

\begin{proof}
Let $a_1:= \frac{9(\ell-5)(\ell-1)}{(3\ell-13)(\ell-1)-4}$ 
and $a_2:=\frac{3(3\ell-16)(\ell-1)}{(3\ell-14)(\ell-1)-4}$. 
Since $\ell \geq 11$,
easy algebra implies that $a_i>\frac{3(\ell-1)}{\ell}$, for each $i\in\{1,2\}$.
Thus, $a_i(n-\frac{2(\ell+1)}{\ell-1}) >
\frac{3(\ell-1)}{\ell}\left(n-\frac{2(\ell+1)}{\ell-1}\right) =
\frac{3(\ell-1)}{\ell}n-\frac{6(\ell+1)}{\ell-1}$ for each $i\in\{1,2\}$.  So
it suffices to show that $\exP(n,C_{\ell})\geq
a_1(n-\frac{2(\ell+1)}{\ell-1})$ when $\ell \ge 11$ and $\ell$ is odd; and that $\exP(n,C_{\ell})\geq
a_2(n-\frac{2(\ell+1)}{\ell-1})$ when $\ell \ge 11$ and $\ell$ is even (for the
claimed values of $n$).
Let $G$ be a dense graph of girth $\ell+1$. Recall that
$n(G)=(5\ell-5)\frac{k}{2} - \ell+3$ for some even integer $k$, and that $G$
has $10k-8$ vertices of degree $3$; let $n_3:=10k-8$.

When $\ell \ge 11$ and $\ell$ is odd, let $t_1:=\frac{\ell-1}{2}$ and
$n_{B_1}:=3t_1-4=\frac{3(\ell-1)}{2}-4$. We have $t_1\ge 5$; so by
Lemma~\ref{non-ham-lem}, there exists a plane triangulation $B_1$ with $n_{B_1}$
vertices and with each cycle of length at most $2t_1=\ell-1$. By Euler's
formula, $e_{B_1}=e(B_1)=3(3t_1-4)-6=9t_1-18=9(\frac{\ell-1}{2}-2)$. Form $G_1'$
by substituting $B_1$ into $G$. Lemma~\ref{lem:construction} implies that $G'$
is a plane graph with no cycle of length $\ell$, and that
$n(G_1')=n(G)(n_{B_1}-1)-n_3/2=((5\ell-5)\frac{k}{2}-\ell+3)(\frac{3(\ell-1)}{2}-5)-(5k-4)$.
By Lemma~\ref{obs}, we have 
\begin{align*}
    e(G_1') &=\frac{e_{B_1}(\ell-1)}{(n_{B_1}-1)(\ell-1)-2}\left(n(G_1')-\frac{2(\ell+1)}{\ell-1}\right)\\
    &=\frac{9(\ell-5)(\ell-1)}{(3\ell-13)(\ell-1)-4}
\left(n(G_1')-\frac{2(\ell+1)}{\ell-1}\right)\\
&=a_1\left(n(G_1')-\frac{2(\ell+1)}{\ell-1}\right).
\end{align*}
Hence, if $\ell \ge 11$ and $\ell$ is odd, then whenever $k$ is positive and
even and $n=((5\ell-5)\frac{k}{2}-\ell+3)(\frac{3(\ell-1)}{2}-5)-(5k-4)$,
we have $\exP(n, C_{\ell})\ge a_1\left(n-
\frac{2(\ell+1)}{\ell-1}\right)>\frac{3(\ell-1)}{\ell}n- \frac{6(\ell+1)}{\ell}.$

Now suppose $\ell \ge 11$ and $\ell$ is even. Let $t_2:=\frac{\ell}{2}-1$ and
$n_{B_2}:=3t_2-3=\frac{3\ell}{2}-6$. Form $G_2'$ by substituting $B_2$ into
$G$, where $B_2$ is a plane triangulation with $n_{B_2}$ vertices and each
cycle of $B_2$ has length at most $2t_2+1=\ell-1$. (The existence of $B_2$ is
guaranteed by Lemma~\ref{non-ham-lem}.) 
By Euler's formula, $e_{B_2}=e(B_2)=\frac{9\ell}{2}-24$.
Similarly, it follows from
 Lemma~\ref{lem:construction} that $G_2'$ is a plane graph with no cycle of
length $\ell$, and that $n(G_2')= n(G)(n_{B_2}-1)-n_3/2 =
((5\ell-5)\frac{k}{2}-\ell+3)(\frac{3\ell}{2}-7) - (5k-4)$. 
Lemma~\ref{obs} implies that
\begin{align*}
    e(G_2') &=\frac{e_{B_2}(\ell-1)}{(n_{B_2}-1)(\ell-1)-2}\left(n(G_2') 
-\frac{2(\ell+1)}{\ell-1}\right)\\
    &=\frac{3(3\ell-16)(\ell-1)}{(3\ell-14)(\ell-1)-4}
\left(n(G_2')-\frac{2(\ell+1)}{\ell-1}\right)\\
&=a_2\left(n(G_2')-\frac{2(\ell+1)}{\ell-1}\right)>\frac{3(\ell-1)}{\ell}n(G_2')- 
\frac{6(\ell+1)}{\ell}.
\end{align*}
This completes our proof.
\end{proof}

Now for each $\ell\ge 11$, we extend the construction in Theorem~\ref{main1-thm}
to all sufficiently large $n$ (which will prove the first sentence of
Theorem~\ref{over-thm}).  Our general idea is to build a counterexample
with order $n'$, larger than $n$, and delete vertices to get a counterexample of
order precisely $n$.  To see that this works, note that we can substitute
different gadgets for different vertices in a sparse planar graph of girth
$\ell+1$.  As long as each gadget has more than $\ell$ vertices, we will beat
the bound in Conjecture~\ref{main-conj}.  In fact, we still beat the bound if a
bounded number of gadgets have exactly $\ell$ vertices, and all other gadgets
have more vertices (this is only needed in the case that $\ell\in \{11,12\}$,
since that is when the gadget has precisely $\ell$ vertices).
So we follow the construction in
Theorem~\ref{main1-thm}, and then repeatedly remove vertices of degree 3 (that lie
in $B$ in Lemma~\ref{non-ham-lem}).  We can remove up to $t-4$ of these from
each gadget.  And the increase to the order of $G'$ when we increase $k$ in
Theorem~\ref{main1-thm} is less than $(5g-10)(3t-5)$.  So it suffices that the
number of vertices in the sparse planar graph $G$ is greater than
$\lceil(5g-10)(3t-5)/(t-4)\rceil\le 50(g-2)$.  %}  
This proves the first sentence of
Theorem~\ref{over-thm}.

\section{Denser Constructions and a Revised Conjecture}
\label{second-sec}
\label{denser-sec}

In this short section, we construct counterexamples to Conjecture~\ref{main-conj}
that are asymptotically much denser than those in the previous section.  We
also propose two revised versions of Conjecture~\ref{main-conj}.

By iterating the idea in Lemma~\ref{non-ham-lem},
Moon and Moser~\cite{moon-moser} constructed planar triangulations where the
length of the longest cycle is sublinear in the order.  These triangulations
will serve as the gadgets in our denser constructions.

\begin{thm}[\cite{moon-moser}]
\label{MM-thm}
For each positive integer $k$ there exists a 3-connected plane triangulation
$G_k$ with $n(G_k)=\frac{3^{k+1}+5}2$ and with longest cycle of length less than
$\frac72n(G_k)^{\log_32}$.
\end{thm}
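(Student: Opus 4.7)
The plan is to construct $G_k$ recursively: start with $G_0 = K_4$ embedded in the plane, and form $G_{k+1}$ from $G_k$ by inserting a new vertex inside each interior face of $G_k$ and joining it to the three corners of that face. An easy induction shows that each $G_k$ is a 3-connected plane triangulation (every maximal planar graph on at least $4$ vertices is automatically $3$-connected). Since a plane triangulation on $n(G_k)$ vertices has $2n(G_k)-4$ faces and hence $2n(G_k)-5$ interior faces, we get the recurrence $n(G_{k+1}) = 3n(G_k) - 5$ with $n(G_0) = 4$, whose solution is $n(G_k) = \tfrac{3^{k+1}+5}{2}$, matching the theorem.

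The heart of the proof is the recursive bound $c(G_{k+1}) \le 2\, c(G_k)$, where $c(H)$ denotes the length of the longest cycle of $H$. Call the vertices of $V(G_{k+1}) \setminus V(G_k)$ \emph{new}; since each lies in a distinct interior face of $G_k$, the new vertices form an independent set of $G_{k+1}$. Fix any cycle $C$ of $G_{k+1}$ and write $a$, $b$ for the numbers of new and old vertices on $C$. Independence gives $a \le b$. As we walk around $C$, any two consecutive old vertices $u,v$ are either adjacent on $C$ (so $uv \in E(G_k)$) or separated by exactly one new vertex $w$, in which case $u, v$ are both corners of the face of $G_k$ containing $w$, so again $uv \in E(G_k)$. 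Thus the $b$ old vertices, in their cyclic order along $C$, trace out a closed walk of length $b$ in $G_k$ on $b$ distinct vertices. A short check (a repeated edge would force a repeated old vertex on $C$) shows this walk is in fact a simple cycle of $G_k$, so $b \le c(G_k)$ and $|C| = a+b \le 2\, c(G_k)$.

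Iterating from $c(G_0) = c(K_4) = 4$ yields $c(G_k) \le 2^{k+2}$. Combining with the crude lower bound $n(G_k) \ge \tfrac{3}{2}\cdot 3^k$ gives
\[
\frac{c(G_k)}{n(G_k)^{\log_3 2}} \;\le\; \frac{2^{k+2}}{(3/2)^{\log_3 2}\cdot 2^k} \;=\; 2^{1+\log_3 2},
\]
which is approximately $3.10$ and hence strictly less than $\tfrac{7}{2}$, as required.

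The step I expect to be the main obstacle is the shortcutting argument itself: one must verify that the walk obtained by deleting new vertices from $C$ (and joining each pair of old neighbours of a new vertex) is truly a simple cycle rather than a closed walk with a repeated edge, and handle cleanly the small-cycle corner case $b \le 2$ (there, $|C|\le 4 \le 2\,c(G_k)$ trivially since $G_k$ contains triangles). Once the key lemma is in hand, the vertex count follows from Euler's formula and the final inequality reduces to checking $2^{1+\log_3 2} < \tfrac{7}{2}$.
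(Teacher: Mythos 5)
Your proposal is correct and follows essentially the same approach as the paper's sketch (the Moon--Moser construction, iterated stellar subdivision of $K_4$, with the observation that the newly added vertices form an independent set so the longest cycle at most doubles at each step while the order roughly triples). The one place you go beyond the paper's informal sketch is in carefully justifying that deleting the new vertices from a cycle of $G_{k+1}$ and shortcutting yields a genuine cycle of $G_k$ --- a step the paper glosses over but which is needed to make the ``at most half at each step'' recursion rigorous --- and your handling of it, including the degenerate $b\le 2$ cases, is sound.
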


\begin{cor}
\label{MM-cor}
There exists a positive real $D_1$ such that for all integers $\ell\ge 6$ there
exists a plane triangulation $G_{\ell}$ with $n(G_{\ell})\ge
D_1\ell^{\lg_23}$ such that $G_{\ell}$ has no cycle of length at least $\ell$.
\end{cor}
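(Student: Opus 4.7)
The plan is to apply Theorem~\ref{MM-thm} directly, choosing the Moon--Moser index $k$ so that the resulting triangulation $G_k$ has longest cycle strictly less than $\ell$ while having as many vertices as possible.

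Given $\ell\ge 6$, I would let $k$ be the largest positive integer satisfying $\tfrac{7}{2}\,n(G_k)^{\log_3 2}\le \ell$, and then set $G_\ell:=G_k$. By Theorem~\ref{MM-thm}, every cycle in $G_\ell$ has length strictly less than $\tfrac{7}{2}\,n(G_k)^{\log_3 2}\le \ell$, which immediately gives the required cycle-length property.

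To obtain the lower bound on $n(G_\ell)$, I would exploit the maximality of $k$: the defining inequality fails at $k+1$, so $n(G_{k+1})>\bigl(\tfrac{2\ell}{7}\bigr)^{\log_2 3}$. Since consecutive Moon--Moser orders satisfy $n(G_{k+1})=3\,n(G_k)-5\le 3\,n(G_k)$, I would conclude
\[
n(G_\ell)=n(G_k)\ge \tfrac{1}{3}\bigl(\tfrac{2}{7}\bigr)^{\log_2 3}\ell^{\log_2 3},
\]
which suggests a candidate value of $D_1$. Note that the inversion of the Moon--Moser exponent is exactly what produces the target exponent $\log_2 3$, since $1/\log_3 2=\log_2 3$.

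The only wrinkle is the finitely many small values of $\ell$ (roughly $\ell\in\{6,\ldots,11\}$) for which even $k=1$ fails the bound defining $k$. For these values I would simply take $G_\ell:=K_4$, which is a plane triangulation with longest cycle $4<\ell$; shrinking $D_1$ further if necessary then handles these finitely many exceptions uniformly. I do not anticipate any substantive obstacle: the argument is a direct application of Theorem~\ref{MM-thm} combined with the geometric growth $n(G_{k+1})\le 3\,n(G_k)$, plus light bookkeeping on the constant.
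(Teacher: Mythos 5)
The paper states this corollary without proof, treating it as an immediate consequence of Theorem~\ref{MM-thm}; your argument correctly supplies the omitted details by the natural route. Choosing the largest Moon--Moser index $k$ with $\tfrac{7}{2}\,n(G_k)^{\log_3 2}\le\ell$, using the geometric growth $n(G_{k+1})=3\,n(G_k)-5\le 3\,n(G_k)$ to lower-bound $n(G_k)$ by $\tfrac13\bigl(\tfrac{2}{7}\bigr)^{\log_2 3}\ell^{\log_2 3}$, and falling back to $K_4$ (with $D_1$ shrunk if needed) for the finitely many small $\ell$ is exactly what is intended, and the constant you extract is valid.
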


Chen and Yu~\cite{chen-yu} showed that Theorem~\ref{MM-thm} is essentially best possible.

\begin{thm}[\cite{chen-yu}]
\label{CY-thm}
There exists a positive real $D_2$ such that every 3-connected $n$-vertex
planar graph contains a cycle of length at least $D_2n^{\log_32}$.
\end{thm}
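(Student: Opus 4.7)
The plan is to prove this by strong induction on $n$, mirroring the recursive structure that makes the Moon--Moser bound tight in Theorem~\ref{MM-thm}. Moon and Moser start from a triangle and recursively stellate each face (adding an interior vertex joined to the three face vertices), so an $n$-vertex block decomposes into three sub-blocks of size roughly $n/3$, and any cycle visits at most two of them: entering a sub-block across the separating triangle consumes two of its three available attachments. I would try to run the converse direction: show that every 3-connected planar graph admits a balanced 3-separator and use it to build a long cycle inductively from cycles in two of the three resulting pieces.

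To make the induction self-reproducing, the natural strengthening is a statement about \emph{Tutte cycles}: for every 3-connected plane graph $G$ with a designated edge $e$ on the outer face, there is a cycle $C$ through $e$ of length at least $D_2\, n(G)^{\log_3 2}$ such that every bridge of $C$ in $G$ has at most three attachments on $C$. Tutte's classical theorem already produces such a cycle without the length guarantee, and the three-attachment condition is precisely what keeps each recursive piece 3-connected after splitting along a 3-cut $\{x,y,z\}$. Cycles built inductively on two opposite sides can then be spliced at $\{x,y,z\}$ into a single cycle of $G$ through $e$.

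The main recursion has the form $f(n) \geq 2f(n/3 + O(1)) - O(1)$, which unfolds to $f(n) = \Omega(n^{\log_3 2})$ once $D_2$ is chosen small enough to absorb the additive slack on the base cases. The tricky bookkeeping is which two of the three regions to recurse into: one must choose a pair whose total size is at least $2n/3$ while still presenting each chosen piece as a 3-connected plane graph with a designated boundary edge compatible with the Tutte condition.

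The hard part, which is the technical heart of Chen--Yu's paper, is guaranteeing a \emph{balanced} 3-cut at each step and preserving the Tutte-cycle hypothesis across the split. In a planar triangulation one would like the separator to always be a non-facial triangle, but in a general 3-connected planar graph one may first have to contract an edge to expose a useful 3-cut, and a careful bridge-counting invariant is then needed to ensure that the designated edge and the forbidden vertices migrate correctly under contraction and splicing. I would expect most of the write-up to go into verifying this invariant in degenerate cases: when one side of the separation is trivial, when the designated edge lies inside the cut, or when no sufficiently balanced 3-cut exists and a different structural lemma (perhaps iterating through a sequence of nested 3-cuts to amortize the imbalance) must intervene.
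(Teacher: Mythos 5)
The paper does not prove Theorem~\ref{CY-thm}: it is quoted from Chen and Yu~\cite{chen-yu} and used only as a black box to conclude that the Moon--Moser construction of Theorem~\ref{MM-thm} is essentially optimal (so that the exponent $\log_3 2$ in Conjecture~\ref{conj2} is the right one). There is therefore no internal proof to compare against, and your sketch stands or falls on its own.

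Your high-level plan---strengthen to a Tutte-cycle statement, split along a 3-separator, recurse on both sides, and splice---is indeed the philosophy of Chen and Yu's argument, so you have identified the right tool, but as written the sketch does not establish the recursion. Two concrete gaps. First, a 3-cut $\{x,y,z\}$ in a 3-connected planar graph separates the graph into exactly \emph{two} pieces, not three: if $G-\{x,y,z\}$ had three components, each would attach to all of $x,y,z$ by 3-connectivity, and contracting each component to a single vertex would produce a $K_{3,3}$-minor, contradicting planarity. Your ``three sub-blocks'' arise from three \emph{different} separating triangles of the Moon--Moser graph, and the honest form of the recursion is $f(n)\ge f(n_1)+f(n_2)-O(1)$ with $n_1+n_2\approx n$. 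Second, the obstruction is not finding a maximally balanced 3-cut but the possibility that \emph{every} 3-cut separates off only $o(n)$ vertices; in that regime the recursion makes no progress and the additive losses swamp the gain, and one must instead fall back on Tutte's theorem (or a quantitative relative) rather than recursing. The intermediate regime is where Chen and Yu's bridge-attachment bookkeeping does its real work; you correctly flag this as ``the hard part,'' but at present the recursion $f(n)\ge 2f(n/3+O(1))-O(1)$ is asserted rather than derived, so the argument is an outline, not a proof.
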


We briefly sketch the Moon--Moser construction, which proves Theorem~\ref{MM-thm}.
For a more detailed analysis,
we recommend Section 2 of~\cite{chen-yu}.  Start with a planar drawing of $K_4$,
which we call $T_1$.  To form $T_{i+1}$ from $T_i$, add a new vertex $v_f$
inside each face $f$ (other than the outer face), making $v_f$ adjacent to each
of the three vertices on the boundary of $f$, see Figure~\ref{fig:Ti}.  It is each to check that the
order of $T_i$ is $3+ (1+3+\ldots+3^{i-1})\approx \frac{3^i}2$.

\newcommand{\deeprer}[4]{
\path($(#1)!0.5!(#2)$) -- node[pos=0.35](#4){} (#3);
\draw(#4)--(#3)(#1)--(#4)--(#2);
}

\newcommand{\deeprerB}[4]{
\path($(#1)!0.5!(#2)$) -- node[pos=0.55](#4){} (#3);
\draw(#4)--(#3)(#1)--(#4)--(#2);
}

\newcommand{\deeeprer}[4]{
\deeprer{#1}{#2}{#3}{#4}
\deeprerB{#1}{#2}{#4}{xx1}
\deeprerB{#1}{#3}{#4}{xx2}
\deeprerB{#2}{#3}{#4}{xx3}
}

\begin{figure}
\begin{center}
\begin{tikzpicture}[scale=3.5, thick]

\clip (-.5,-.19) rectangle (7,.9);

\draw
(0,0) node(a){}(60:1) node(b){}(0:1) node(c){}
(a)--(b)--(c)--(a);
\deeprer{a}{b}{c}{x}
\draw (0.5,-0.15) node[fill=white,draw=white]{\footnotesize{$T_1$}};

\begin{scope}[xshift=1.5cm]
\draw
(0,0) node(a){}(60:1) node(b){}(0:1) node(c){}
(a)--(b)--(c)--(a);
\deeeprer{a}{b}{c}{x}
\draw (0.5,-0.15) node[fill=white,draw=white]{\footnotesize{$T_2$}};
\end{scope}

\begin{scope}[xshift=3.0cm]
\draw
(0,0) node(a){}(60:1) node(b){}(0:1) node(c){}
(a)--(b)--(c)--(a);
\deeprer{a}{b}{c}{x}
\deeeprer{a}{b}{x}{y1}
\deeeprer{a}{c}{x}{y2}
\deeeprer{b}{c}{x}{y3}
\draw (0.5,-0.15) node[fill=white,draw=white]{\footnotesize{$T_3$}};
\end{scope}

\end{tikzpicture}
\end{center}
    \caption{Triangulations $T_1$, $T_2$, and $T_3$.}
    \label{fig:Ti}
\end{figure}
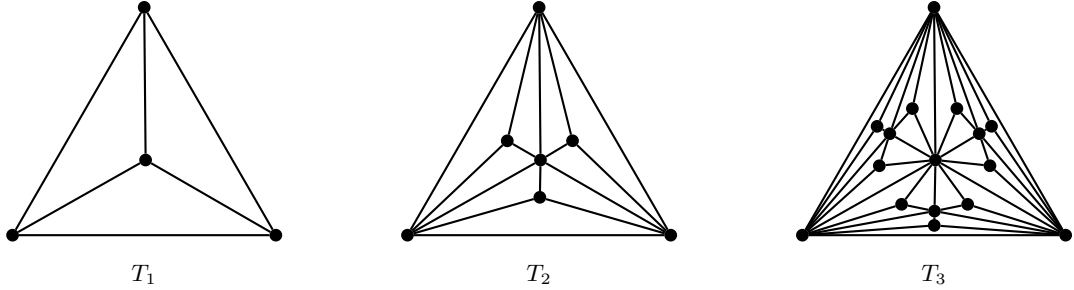

To bound the length of the longest cycle in $T_i$, we note that the
vertices added when forming $T_j$ from $T_{j-1}$ form an independent set, for
each $j$.  Thus, for any cycle in $T_i$, at most half of the vertices were added
at the final step.  Of those added earlier, at most half were added at the
penultimate step, etc.  So the length of a longest cycle grows roughly by a
factor of 2 at each step (while the order of $T_i$ grows roughly by a factor of
3).

To prove the second statement of Theorem~\ref{over-thm}, we substitute into a
sparse planar graph of girth $\ell+1$ a gadget with no cycle of length $\ell$,
as guaranteed by Corollary~\ref{MM-cor}.  We suspect this construction is extremal.
So we conclude with the following two conjectures, which are each best possible.

\begin{conj}
\label{conj1}
Fix $\ell\ge 7$, let $G$ be a dense graph of girth $\ell+1$, and let $B$ be a
$n$-vertex planar triangulation with no $\ell$-cycle, where $B$ is chosen to
maximize $n$.  If $G'$ is formed by substituting $B$ into $G$ and $n':=|V(G')|$,
then $\exP(n',C_l)=|E(G')|$.
\end{conj}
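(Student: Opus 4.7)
The plan is to establish both directions of the claimed equality $\exP(n',C_\ell)=|E(G')|$.  The lower bound $\exP(n',C_\ell)\ge |E(G')|$ is immediate from Lemma~\ref{lem:construction}: since $G$ has girth $\ell+1$ and $B$ has no $C_\ell$, the substituted graph $G'$ is an $n'$-vertex planar graph with no $C_\ell$.  The substance of the conjecture is the matching upper bound, so I would focus there.

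First I would take a maximum-size $n'$-vertex planar $C_\ell$-free graph $H$ and aim for a structural decomposition that mirrors the substitution construction.  Specifically, I would split $H$ along its 2-vertex and 3-vertex cuts, via an SPQR-style decomposition, into 3-connected ``gadget'' pieces $Q_1,\ldots,Q_r$.  Each $Q_i$ inherits $C_\ell$-freeness, since any cycle in $Q_i$ lifts to a cycle in $H$ of at least the same length by replacing each virtual edge with an internally-disjoint path through the neighbouring pieces.  Hence by Chen--Yu (Theorem~\ref{CY-thm}) together with Corollary~\ref{MM-cor}, in the extremal regime each $Q_i$ satisfies $|V(Q_i)|\le n(B)$, and so $e(Q_i)\le 3|V(Q_i)|-6\le e(B)$ with equality only when $Q_i$ is an extremal triangulation of the form $B$.

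Next I would bound the ``skeleton'' graph $S$ obtained by contracting each gadget to a vertex of the SPQR tree.  The key claim is that $S$ has girth at least $\ell+1$: a shorter cycle in $S$ lifts to a cycle in $H$ whose length is a sum of path-lengths chosen inside each participating gadget, and because 3-connected planar triangulations admit paths of many different lengths between any pair of boundary vertices, the set of realizable total lengths forms a dense interval that would be forced to contain $\ell$.  Given this girth bound, Lemma~\ref{dense-lem} applied to $S$ bounds the number of gluing points, and combining this with the per-gadget edge bound and the arithmetic of Lemma~\ref{obs} recovers the precise count $|E(G')|$.  A convexity/exchange argument should then confirm that a uniform choice of gadget is optimal, ruling out heterogeneous mixtures of gadget sizes.

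The main obstacle is the girth lower bound on $S$: ensuring that no combination of internal gadget-paths across a 2-cut or 3-cut gluing sums to exactly $\ell$ is delicate, especially at 3-cuts, where three path-lengths can be chosen simultaneously in each gadget and the interaction between the three must be controlled.  This requires a precise ``interval of realizable lengths'' result for 3-connected planar triangulations that narrowly miss the Moon--Moser threshold, which is tightly tied to Corollary~\ref{MM-cor} and Theorem~\ref{CY-thm} and may not hold uniformly across all cut configurations.  A secondary difficulty is ruling out local perturbations of $B$ -- e.g., gadgets with slightly fewer vertices but higher density, or gadgets sharing more than three boundary vertices with the skeleton -- which is needed in order to upgrade an asymptotic bound to the exact equality required by the conjecture.
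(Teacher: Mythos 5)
The statement you set out to prove is Conjecture~\ref{conj1}, which the paper does \emph{not} prove; the authors remark immediately after stating it that proving it ``in general seems difficult,'' which is precisely why they also pose the weaker Conjecture~\ref{conj2}. So there is no proof in the paper to compare your approach against.

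Taken on its own terms, your sketch is a plausible line of attack but it is not a proof, and beyond the gaps you honestly flag there is at least one concrete logical error. You assert that each SPQR piece $Q_i$ ``inherits $C_\ell$-freeness, since any cycle in $Q_i$ lifts to a cycle in $H$ of at least the same length.'' That inference runs backwards: a length-$\ell$ cycle in $Q_i$ that uses virtual edges lifts to a cycle in $H$ of length \emph{at least} $\ell$, and that lifted cycle may have length strictly greater than $\ell$, which is perfectly allowed in a $C_\ell$-free $H$. So a piece $Q_i$ can contain a $C_\ell$ even though $H$ does not, and the chain $|V(Q_i)|\le n(B)$ hence $e(Q_i)\le 3|V(Q_i)|-6\le e(B)$ does not follow. (Even granting $C_\ell$-freeness of $Q_i$, Theorem~\ref{CY-thm} only guarantees a cycle of length at least $D_2|V(Q_i)|^{\log_32}$, not one of length exactly $\ell$; converting $C_\ell$-freeness into the circumference bound you need would require some additional cycle-spectrum fact about 3-connected planar graphs that you neither state nor cite.) The girth claim for the skeleton $S$ rests on an unproven ``dense interval of realizable lengths'' lemma whose difficulty you correctly identify, particularly at 3-cuts where three path lengths must be controlled simultaneously; and the ``convexity/exchange argument should then confirm'' that rules out heterogeneous gadgets, gadgets with more than three attachment vertices, extra inter-gadget edges, and so on is a placeholder, not an argument, yet it carries the full weight of upgrading an asymptotic bound to the exact equality $\exP(n',C_\ell)=|E(G')|$ that the conjecture asserts. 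The overall strategy---decompose an extremal graph into highly connected pieces, compare each piece to the Moon--Moser-style extremal gadget, and bound the gluing skeleton via its girth and Lemma~\ref{dense-lem}---is the natural one and is likely what any eventual proof would resemble, but as written it does not establish the conjecture, and the authors' own assessment is that carrying it out is genuinely hard.
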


Proving Conjecture~\ref{conj1} seems plausible for some small values of $\ell$.
But proving it in general seems difficult.  So we also pose the following
weaker conjecture.  Note that Conjecture~\ref{conj2} would be immediately
implied by Conjecture~\ref{conj1} (together with Theorem~\ref{CY-thm}).

\begin{conj}
\label{conj2}
There exists a constant $D$ such that for all $\ell$ and for all sufficiently
large $n$ we have $\exP(n,C_{\ell})\le
\frac{3(D\ell^{\lg_23}-1)}{D\ell^{\lg_23}}n$.
\end{conj}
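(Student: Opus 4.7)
The plan is to attack the contrapositive: find a constant $D>0$ such that whenever $G$ is an $n$-vertex planar graph with $e(G)>\frac{3(D\ell^{\lg_23}-1)}{D\ell^{\lg_23}}n$ and $n$ is sufficiently large, $G$ contains a copy of $C_{\ell}$. Setting $s:=D\ell^{\lg_23}$, the hypothesis reads $e(G)>(3-3/s)n$, which is just shy of the triangulation bound $3n-6$, so $G$ must be ``almost'' triangulated. The guiding idea is that such density forces a large $3$-connected near-triangulated subgraph, which by Theorem~\ref{CY-thm} contains a very long cycle; the remaining task is to exhibit a cycle of the precise length $\ell$.

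First I would reduce to the $3$-connected setting. Since $G$ has at most $O(n/s)$ non-triangular faces and at most $O(n/s)$ edge-slack from $3n-6$, a block/$2$-cut decomposition together with a careful counting argument should extract a $3$-connected planar subgraph $H\subseteq G$ with $n(H)\ge c\cdot s=cD\ell^{\lg_23}$ for some absolute constant $c>0$, and with $H$ itself close to a triangulation. Making this quantitatively precise, so that $n(H)$ really grows linearly with $s$ rather than being chopped into small pieces by iterated $2$-cuts and planar separators, is the first nontrivial step.

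Next, invoke Theorem~\ref{CY-thm} on $H$ to produce a cycle $C\subseteq H$ of length at least $D_2\, n(H)^{\log_32}\ge D_2(cD)^{\log_32}\ell$, using $\lg_23\cdot\log_32=1$. Choosing $D$ large enough relative to $1/D_2$ and $1/c$ then guarantees $|C|\ge \ell$. The main obstacle is now to convert this long cycle into a cycle of length \emph{exactly} $\ell$. The natural approach is ear-shortening: in a $3$-connected planar near-triangulation, the vertices off $C$ attach to $C$ via short ears, and each such ear can be used to shortcut $C$ to a shorter cycle. One would like to prove that iterated ear-shortcuts realize every cycle length from a small constant up to $|C|$, and hit $\ell$ in particular. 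This ``spectrum of cycle lengths'' question is what makes the conjecture hard: the Moon--Moser construction (Theorem~\ref{MM-thm}) shows that the longest-cycle bound of Theorem~\ref{CY-thm} is tight, but no analogous statement about intermediate cycle lengths seems to be known.

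As the authors observe, Conjecture~\ref{conj2} is implied by Conjecture~\ref{conj1} together with Theorem~\ref{CY-thm}, so a parallel strategy is to attack Conjecture~\ref{conj1} by a stability argument: show that every near-extremal $C_{\ell}$-free planar graph is built by substituting a Moon--Moser-type gadget into a dense planar graph of girth $\ell+1$, in the sense of Definition~\ref{sub-defn} and Lemma~\ref{obs}. Either route reduces the problem to the same structural question about which cycle lengths must appear in $3$-connected planar triangulations of given order, and this, rather than the density reduction or the invocation of Chen--Yu, appears to be the true bottleneck.
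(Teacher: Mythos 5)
The statement you are trying to prove is Conjecture~\ref{conj2}, which the paper poses as an \emph{open conjecture} and does not prove; the authors only remark that it would follow from Conjecture~\ref{conj1} together with Theorem~\ref{CY-thm}. So there is no proof in the paper to compare against, and your text is (rightly) a strategy sketch rather than a proof. That said, it is worth being explicit about where the sketch has genuine gaps rather than merely ``careful counting to be filled in.''

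Your step~1 (extracting a $3$-connected near-triangulated subgraph $H$ with $n(H)\ge cs$) is plausible but not automatic. The edge slack $3n-6-e(G)=O(n/s)$ controls the \emph{number} of $2$-cuts in Tutte's decomposition, which gives an \emph{average} piece size $\Omega(s)$; getting a single piece that is simultaneously large, $3$-connected, \emph{and} near-triangulated requires an argument that the slack cannot all concentrate in a few sparse pieces while the dense pieces stay small. This is the kind of stability lemma one would need to prove, not assume. The more serious gap is step~3: converting ``a cycle of length at least $\ell$'' into ``a cycle of length exactly $\ell$.'' Your proposed ear-shortening heuristic amounts to asserting that $3$-connected planar near-triangulations have a full spectrum of cycle lengths up to their circumference, and no such theorem is known; indeed, the extremal gadgets in this very paper are built precisely to avoid one specific cycle length ($\ell$) while still having cycles longer than $\ell-1$ is not ruled out a priori, so the spectrum question is delicate. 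You flag this as the ``true bottleneck,'' which is correct. In short, the outline is a reasonable route to attack the conjecture, and the factor-cancellation $\lg_2 3 \cdot \log_3 2 = 1$ in your Chen--Yu application is exactly why the exponent $\lg_2 3$ appears in the conjectured bound, but neither of the two structural lemmas you would need (the density-to-$3$-connectivity stability result, and the cycle-spectrum result) is established, and both are genuinely hard. The paper leaves these open as well.
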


\section*{Acknowledgments}
Most research in this paper took place at the 2021 Graduate Research
Workshop in Combinatorics.  We heartily thank the organizers.  We also thank
Caroline Bang, Florian Pfender, and Alexandra Wesolek for early discussions on this problem.
\bibliographystyle{abbrvurl}

{\footnotesize{\bibliography{refs}}}
\end{document}